\documentclass[11pt]{amsart}

\usepackage[T1]{fontenc}
\usepackage[utf8]{inputenc}
\usepackage{lmodern}
\usepackage{microtype}
\usepackage{amsmath,amssymb,amsfonts,amsthm,mathtools}
\usepackage{mathrsfs}
\usepackage{enumitem}
\usepackage{hyperref}
\usepackage[nameinlink,capitalize]{cleveref}

\hypersetup{
  colorlinks=true,
  linkcolor=blue,
  citecolor=blue,
  urlcolor=blue
}

\newtheorem{theorem}{Theorem}[section]
\newtheorem{proposition}[theorem]{Proposition}

\theoremstyle{definition}
\newtheorem{definition}[theorem]{Definition}
\newtheorem{example}[theorem]{Example}
\newtheorem{remark}[theorem]{Remark}

\newcommand{\R}{\mathbb{R}}

\newcommand{\cl}{\operatorname{cl}}
\newcommand{\rank}{\operatorname{rank}}

\newcommand{\Ker}{\operatorname{Ker}}

\newcommand{\cL}{\mathcal{L}}
\newcommand{\cA}{\mathcal{A}}

\newcommand{\oT}{T^{2,\mathrm{out}}}
\newcommand{\iT}{T^{2,\mathrm{in}}}
\newcommand{\aT}{T^{2,\mathrm{arc}}}

\title[Parabolic second-order tangent sets]{Parabolic second-order tangent sets of semialgebraic sets and applications to polynomial optimization}

\author{Le Cong Trinh}
\address{Department of Mathematics and Statistics, Quy Nhon University, Quy Nhon Nam Ward, Gia Lai, Vietnam}
\email{lecongtrinh@qnu.edu.vn}

\subjclass[2020]{14P10, 14P15, 49J52, 49J53, 90C30, 90C31}
\keywords{Semialgebraic set, parabolic tangent set, parabolic regularity, polynomial optimization, second-order optimality, quadratic growth}

\begin{document}

\begin{abstract}
We study parabolic second-order tangent sets of semialgebraic sets and their
use in local polynomial optimization. For a basic closed semialgebraic
feasible set, we compare the true parabolic tangent set with the algebraic
second-order linearized set determined by gradients and Hessians of the
active constraints. Under directional rank stability and semialgebraic
parabolic arc-realizability, the outer, inner, and arc-generated tangent
sets coincide with this algebraic model. Exact formulas are obtained for
smooth hypersurfaces, regular complete intersections, smooth inequality
systems, and stratified semialgebraic sets. These formulas yield
algebraically checkable second-order necessary conditions and sufficient
conditions for quadratic growth in polynomial optimization. Examples show
how the theory detects curvature, flatness, branch dependence, and the
failure of ordinary quadratic scaling.
\end{abstract}

\maketitle

\section{Introduction}

Tangent constructions are fundamental in variational analysis, real algebraic geometry, and constrained optimization. For smooth manifolds the tangent space gives the first-order local model, while for singular or constrained sets one works with tangent cones and related limiting objects; see Rockafellar--Wets \cite{RockafellarWets1998} and Mordukhovich \cite{Mordukhovich2006,Mordukhovich2018}. In the semialgebraic category, curve selection, quantifier elimination, and finite stratifications make tangent geometry especially tractable; we refer to Bochnak--Coste--Roy \cite{BochnakCosteRoy1998}, Coste \cite{Coste2000}, van den Dries \cite{vanDenDries1998}, Lojasiewicz \cite{Lojasiewicz1965}, Whitney \cite{Whitney1965}, and Gibson et al. \cite{Gibson1979}. However, first-order tangent cones do not capture curvature, second-order activation of constraints, or branchwise feasible behavior. These features are decisive in degenerate polynomial optimization problems.

The object studied in this paper is the parabolic second-order tangent set. Given $S\subset\R^n$, $\bar x\in S$, and $u\in T_S(\bar x)$, it consists of the second-order corrections $w$ that can occur in feasible expansions
\[
        x(t)=\bar x+t u+\frac12t^2w+o(t^2),\qquad t\downarrow0.
\]
Second-order tangent constructions appear in the early work of Ben-Tal \cite{BenTal1980} and Ben-Tal--Zowe \cite{BenTalZowe1982}; they were used by Bonnans, Cominetti, and Shapiro \cite{BCS1999} to formulate second-order regularity conditions yielding no-gap second-order optimality conditions. Related developments in perturbation analysis, semidefinite programming, epi-differentiability, prox-regularity, and parabolic regularity can be found in Bonnans--Shapiro \cite{BonnansShapiro2000}, Shapiro \cite{Shapiro1997}, Rockafellar \cite{Rockafellar1988,Rockafellar1989}, Ioffe \cite{Ioffe1991}, Poliquin--Rockafellar \cite{PoliquinRockafellar1996}, Mohammadi--Sarabi \cite{MohammadiSarabi2020}, and Mohammadi--Mordukhovich--Sarabi \cite{MMS2021}. The main point of the present paper is to make these second-order constructions algebraically checkable for semialgebraic feasible sets.

Let
\[
        S=\{x\in\R^n:g_i(x)=0,
        \ i=1,\ldots,r,
        \ h_j(x)\leq0,
        \ j=1,\ldots,s\}
\]
be a basic closed semialgebraic set. A Taylor expansion along a feasible parabolic arc gives equality constraints
\[
        \nabla g_i(\bar x)u=0,
        \qquad
        \nabla g_i(\bar x)w+u^T\nabla^2g_i(\bar x)u=0,
\]
and, for active inequalities with zero first-order variation,
\[
        \nabla h_j(\bar x)w+u^T\nabla^2h_j(\bar x)u\leq0.
\]
These conditions define an algebraic second-order linearized set $\cL_S^2(\bar x;u)$. The inclusion $T_S^2(\bar x;u)\subseteq \cL_S^2(\bar x;u)$ is automatic, but the reverse inclusion is a genuine geometric issue: one must know whether every formally admissible second-order correction is realized by an actual semialgebraic feasible arc.

We introduce an algebraic parabolic regularity condition that answers this question under directional rank stability and parabolic arc-realizability. Under this condition the outer, inner, and arc-generated parabolic tangent sets coincide with $\cL_S^2(\bar x;u)$. We then prove explicit formulas for smooth hypersurfaces, regular complete intersections, and smooth inequality systems. A stratified formula is also included, not as a separate singularity-theoretic application, but as a necessary tool for optimization over singular semialgebraic feasible regions where several branches may approach the same point with the same first-order direction.

The principal application is to polynomial optimization. For
\[
        \min f(x)\quad \text{subject to}\quad x\in S,
        \qquad f\in\R[x],
\]
a feasible parabolic arc gives the second-order expression
\[
        Q_f(\bar x;u,w)
        :=\nabla f(\bar x)w+u^T\nabla^2f(\bar x)u.
\]
Thus, along critical tangent directions, optimality and quadratic growth depend on $Q_f$ over the true second-order tangent set. Under algebraic parabolic regularity, this set is replaced by the finite algebraic system $\cL_S^2(\bar x;u)$. This provides computable second-order necessary conditions, no-gap sufficient conditions for quadratic growth, and Lagrangian multiplier criteria. This local viewpoint complements global algebraic methods in polynomial optimization such as Lasserre's moment-SOS hierarchy \cite{Lasserre2001} and the gradient-ideal approach of Nie--Demmel--Sturmfels \cite{NieDemmelSturmfels2006}.

The paper is organized as follows. Section~\ref{sec:preliminaries} recalls the basic tangent and semialgebraic tools. Section~\ref{sec:main-results} states the algebraic formulas and regularity criteria. Section~\ref{sec:proofs} gives proofs. Section~\ref{sec:examples} contains examples and counterexamples. Section~\ref{sec:applications} develops the application to polynomial optimization, and Section~\ref{sec:conclusion} concludes the paper.

\section{Preliminaries}\label{sec:preliminaries}

\subsection{Tangent cones and parabolic tangent sets}

We recall the first- and second-order tangent objects used throughout the paper. The Bouligand tangent cone and its variants are standard in variational analysis; see Rockafellar--Wets \cite[Chapters~6 and 13]{RockafellarWets1998} and Mordukhovich \cite{Mordukhovich2006,Mordukhovich2018}. Second-order tangent sets and parabolic tangent constructions are used in second-order optimality theory; see Ben-Tal \cite{BenTal1980}, Ben-Tal--Zowe \cite{BenTalZowe1982}, Bonnans--Cominetti--Shapiro \cite{BCS1999}, and Bonnans--Shapiro \cite{BonnansShapiro2000}. Parabolic regularity in the sense of geometric variational analysis was developed systematically by Mohammadi--Mordukhovich--Sarabi \cite{MMS2021}. Related second-order variational tools include epi-derivatives and prox-regularity, as developed for instance by Rockafellar \cite{Rockafellar1988,Rockafellar1989}, Ioffe \cite{Ioffe1991}, and Poliquin--Rockafellar \cite{PoliquinRockafellar1996}.

Let $S\subset\R^n$ and let $\bar x\in S$. The Bouligand tangent cone of $S$ at $\bar x$ is
\[
T_S(\bar x):=
\left\{u\in\R^n:
\exists t_k\downarrow0,\ \exists u_k\to u
\text{ such that }\bar x+t_k u_k\in S
\right\}.
\]
If $u\in T_S(\bar x)$, the outer parabolic second-order tangent set of $S$ at $\bar x$ in direction $u$ is
\[
\oT_S(\bar x;u):=
\left\{w\in\R^n:
\exists t_k\downarrow0,\ \exists w_k\to w
\text{ such that }
\bar x+t_k u+\frac12t_k^2w_k\in S
\right\}.
\]
When no confusion is possible we write $T_S^2(\bar x;u)$ instead of $\oT_S(\bar x;u)$.

The inner parabolic tangent set is
\[
\iT_S(\bar x;u):=
\left\{w\in\R^n:
\forall t_k\downarrow0,\ \exists w_k\to w
\text{ such that }
\bar x+t_k u+\frac12t_k^2w_k\in S
\right\}.
\]
Clearly
\[
        \iT_S(\bar x;u)\subseteq \oT_S(\bar x;u).
\]
Following the variational terminology of \cite{BCS1999,MMS2021}, we say that $S$ is parabolically regular at $\bar x$ in direction $u$ if
\[
        \iT_S(\bar x;u)=\oT_S(\bar x;u).
\]

For semialgebraic sets it is useful to introduce an arc-generated variant. We define
\[
\aT_S(\bar x;u):=
\left\{w\in\R^n:
\begin{array}{l}
\exists\varepsilon>0,\ \exists\gamma:(0,\varepsilon)\to S\text{ semialgebraic such that}\\[1mm]
\gamma(t)=\bar x+t u+\dfrac12t^2w+o(t^2)
\end{array}
\right\}.
\]
Then
\[
        \aT_S(\bar x;u)\subseteq\oT_S(\bar x;u).
\]
For semialgebraic sets the curve selection lemma converts many limiting constructions into semialgebraic arcs; see \cite[Section~2.5]{BochnakCosteRoy1998}, \cite{Coste2000}, and \cite[Chapter~3]{vanDenDries1998}. Preserving a prescribed second-order coefficient is, however, a stronger property. This is the main reason why parabolic regularity is nontrivial in the present algebraic setting.

\subsection{Semialgebraic sets and active constraints}

We use standard terminology from real algebraic and semialgebraic geometry. A subset $S\subset\R^n$ is semialgebraic if it is a finite union of sets defined by finitely many polynomial equalities and inequalities. The basic structural facts used below are the Tarski--Seidenberg principle, the semialgebraic curve selection lemma, and the existence of finite $C^p$ semialgebraic stratifications; see Bochnak--Coste--Roy \cite{BochnakCosteRoy1998}, Coste \cite{Coste2000}, van den Dries \cite{vanDenDries1998}, and the foundational semianalytic notes of Lojasiewicz \cite{Lojasiewicz1965}. The use of strata and tangent limits is also connected with the classical work of Whitney \cite{Whitney1965} and with the singularity-theoretic treatment of stratified mappings in Gibson--Wirthmueller--du Plessis--Looijenga \cite{Gibson1979}.

In most of the paper we consider basic closed semialgebraic sets
\[
        S=\{x\in\R^n:g_i(x)=0,\ i=1,\ldots,r,\ h_j(x)\leq0,\ j=1,\ldots,s\},
\]
where $g_i,h_j\in\R[x]$. We write
\[
        g=(g_1,\ldots,g_r),\qquad h=(h_1,\ldots,h_s).
\]
The active inequality index set at $\bar x\in S$ is
\[
        I(\bar x):=\{j\in\{1,\ldots,s\}:h_j(\bar x)=0\}.
\]
For $u\in T_S(\bar x)$, the second-order active index set is
\[
        I_0(\bar x;u):=
        \{j\in I(\bar x):\nabla h_j(\bar x)u=0\}.
\]
The inequalities in $I(\bar x)\setminus I_0(\bar x;u)$ have negative first-order variation along $u$ and hence impose no second-order restriction.

For a polynomial mapping $G=(G_1,\ldots,G_m):\R^n\to\R^m$, we write $DG(x)$ for its Jacobian matrix and
\[
        D^2G(x)[u,u]
        :=\big(u^T\nabla^2G_1(x)u,\ldots,u^T\nabla^2G_m(x)u\big).
\]
The algebraic second-order linearized set associated with $S$ at $(\bar x,u)$ is
\[
\cL_S^2(\bar x;u):=
\left\{w\in\R^n:
\begin{array}{l}
Dg(\bar x)w+D^2g(\bar x)[u,u]=0,\\[1mm]
\nabla h_j(\bar x)w+u^T\nabla^2h_j(\bar x)u\leq0,
\quad j\in I_0(\bar x;u)
\end{array}
\right\}.
\]
This is an affine polyhedron in the variable $w$, although its coefficients depend algebraically on $\bar x$ and $u$. In the optimization literature, analogous second-order linearized systems appear in second-order optimality and perturbation analysis for nonlinear programming and semidefinite programming; see \cite{BCS1999,BonnansShapiro2000,Shapiro1997}.

\subsection{Directional rank stability and arc-realizability}

The rank-stability condition below is a directional version of the constant-rank philosophy used in nonlinear programming and smooth constraint geometry; compare \cite{BonnansShapiro2000,Shapiro1997}. The arc-realizability condition is specific to the semialgebraic category and relies on finite-arc behavior guaranteed by semialgebraic curve selection and stratification \cite{BochnakCosteRoy1998,vanDenDries1998}.

Let
\[
        \cA_0(\bar x;u):=
        \{g_1,\ldots,g_r\}\cup\{h_j:j\in I_0(\bar x;u)\}
\]
be the second-order active family. We say that $\cA_0(\bar x;u)$ satisfies directional rank stability at $(\bar x,u)$ if there exist a neighborhood $U$ of $\bar x$, a conic neighborhood $V$ of $u$, and an integer $\rho$ such that, for every feasible arc $\gamma(t)$ satisfying
\[
        \gamma(t)=\bar x+t v+O(t^2),\qquad v\in V,
\]
whose points lie in $U$ and whose first-order active set agrees with $I_0(\bar x;u)$, the rank of the gradient matrix of the active family $\cA_0(\bar x;u)$ is equal to $\rho$ along $\gamma(t)$ for all sufficiently small $t>0$.

\begin{remark}
This condition is intentionally directional. It is weaker than requiring constant rank in a full neighborhood of $\bar x$, but strong enough to rule out a loss of active codimension along the parabolic directions under consideration. For regular complete intersections it is automatic. For singular unions of branches it should be imposed stratumwise rather than globally.
\end{remark}

\begin{definition}[Parabolic arc-realizability]
Let $S$ be a basic closed semialgebraic set, let $\bar x\in S$, and let $u\in T_S(\bar x)$. We say that $S$ satisfies parabolic arc-realizability at $(\bar x,u)$ if for every $w\in\cL_S^2(\bar x;u)$ there exists a semialgebraic arc $\gamma:(0,\varepsilon)\to S$ such that
\[
        \gamma(t)=\bar x+t u+\frac12t^2w+o(t^2).
\]
\end{definition}

\begin{definition}[Algebraic parabolic regularity]
The set $S$ satisfies algebraic parabolic regularity at $(\bar x,u)$ if the second-order active family satisfies directional rank stability at $(\bar x,u)$ and $S$ satisfies parabolic arc-realizability at $(\bar x,u)$.
\end{definition}

\section{Main results}\label{sec:main-results}

\subsection{The basic inclusion}

\begin{theorem}[Algebraic second-order inclusion]\label{thm:basic-inclusion}
Let
\[
        S=\{x\in\R^n:g(x)=0,\ h(x)\leq0\}
\]
be a basic closed semialgebraic set. Let $\bar x\in S$ and $u\in T_S(\bar x)$. Then
\[
        \oT_S(\bar x;u)\subseteq \cL_S^2(\bar x;u).
\]
Moreover, $\oT_S(\bar x;u)$ is semialgebraic.
\end{theorem}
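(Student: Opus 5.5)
The plan has two parts: the inclusion via second-order Taylor expansion along the defining sequences, and semialgebraicity via Tarski--Seidenberg.

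\emph{Step 1 (Taylor expansion).} Fix $w\in\oT_S(\bar x;u)$. Choose $t_k\downarrow0$ and $w_k\to w$ with $x_k:=\bar x+t_ku+\frac12t_k^2w_k\in S$, and set $d_k:=t_ku+\frac12t_k^2w_k$. Every $g_i$ and $h_j$ is a polynomial, hence $C^2$. So for each $\phi\in\{g_i\}\cup\{h_j\}$,
\[
\phi(x_k)=\phi(\bar x)+t_k\nabla\phi(\bar x)u+\tfrac12t_k^2\big(\nabla\phi(\bar x)w_k+u^T\nabla^2\phi(\bar x)u\big)+o(t_k^2).
\]
Here the $o(t_k^2)$ term absorbs $\frac14t_k^3\,u^T\nabla^2\phi(\bar x)w_k$, the $w_k^Tw_k$ term, and the cubic remainder. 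This uses $|d_k|=O(t_k)$ and the boundedness of $(w_k)$.

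\emph{Step 2 (equalities).} For an equality $g_i$, we have $g_i(x_k)=0=g_i(\bar x)$. Dividing the expansion by $t_k$ and letting $k\to\infty$ gives $\nabla g_i(\bar x)u=0$. Dividing the remainder by $\frac12t_k^2$ and letting $k\to\infty$ gives $\nabla g_i(\bar x)w+u^T\nabla^2g_i(\bar x)u=0$, since $w_k\to w$.

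\emph{Step 3 (inequalities).} For $j\in I_0(\bar x;u)$, we have $h_j(\bar x)=0$ and $\nabla h_j(\bar x)u=0$. Feasibility $h_j(x_k)\le0$ then yields $\frac12t_k^2(\nabla h_j(\bar x)w_k+u^T\nabla^2h_j(\bar x)u)+o(t_k^2)\le0$. Dividing by $t_k^2$ and passing to the limit gives the required inequality. Indices outside $I_0$ impose no condition in $\cL_S^2(\bar x;u)$, so nothing is needed for them. Together, Steps 2 and 3 show $w\in\cL_S^2(\bar x;u)$.

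\emph{Step 4 (semialgebraicity).} Rewrite the definition of $\oT_S(\bar x;u)$ without sequences. It is the set of $w$ such that for all $\varepsilon>0$ and $\delta>0$ there exist $t\in(0,\delta)$ and $w'$ with $|w'-w|<\varepsilon$ and $\bar x+tu+\frac12t^2w'\in S$. The equivalence is immediate: from sequences one gets the formula, and from the formula one extracts sequences by taking $\varepsilon=\delta=1/k$. The condition is a first-order formula over the reals, with parameters $\bar x$ and $u$, built from the semialgebraic set $S$ and polynomial relations. By the Tarski--Seidenberg principle \cite{BochnakCosteRoy1998}, the set it defines is semialgebraic. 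Alternatively, it is the closure-type set $\{w:(0,w)\in\cl\{(t,w'):t>0,\ \bar x+tu+\frac12t^2w'\in S\}\}$, which is a fiber of the closure of a semialgebraic set.

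I expect no real obstacle. The only care needed is in Step 1: checking that the cross terms involving $w_k$ are genuinely $o(t_k^2)$ uniformly. This relies on $(w_k)$ being bounded, which holds because it converges.
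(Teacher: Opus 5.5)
Your proposal is correct and follows the paper's proof essentially line by line. It uses the same second-order Taylor expansion along $x_k=\bar x+t_ku+\frac12t_k^2w_k$, divides by $t_k$ and then by $\frac12t_k^2$ for the equalities, applies the same division to the indices in $I_0(\bar x;u)$, and invokes Tarski--Seidenberg on the same $\forall\varepsilon\,\forall\delta\,\exists t\,\exists z$ formula. Two small remarks: the cross term in Step 1 is $\frac12t_k^3u^T\nabla^2\phi(\bar x)w_k$ rather than $\frac14t_k^3u^T\nabla^2\phi(\bar x)w_k$, which is immaterial since it is $o(t_k^2)$ either way, and your closure-fiber description is a valid equivalent route to semialgebraicity.
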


\subsection{Algebraic parabolic regularity}

\begin{theorem}[Algebraic parabolic regularity criterion]\label{thm:regularity-criterion}
Let $S=\{g=0,\ h\leq0\}$ be a basic closed semialgebraic set. Suppose that $S$ satisfies algebraic parabolic regularity at $(\bar x,u)$. Then
\[
        \oT_S(\bar x;u)=\iT_S(\bar x;u)=\aT_S(\bar x;u)=\cL_S^2(\bar x;u).
\]
In particular, $S$ is parabolically regular at $\bar x$ in direction $u$.
\end{theorem}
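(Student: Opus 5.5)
The plan is to close a chain of inclusions,
\[
\cL_S^2(\bar x;u)\subseteq \aT_S(\bar x;u)\subseteq \iT_S(\bar x;u)\subseteq \oT_S(\bar x;u)\subseteq \cL_S^2(\bar x;u).
\]
Once every link is in place, all four sets coincide, and parabolic regularity of $S$ at $\bar x$ in direction $u$ follows at once. Two of the links are already available. The inclusion $\iT_S\subseteq\oT_S$ was noted in the preliminaries, and $\oT_S(\bar x;u)\subseteq\cL_S^2(\bar x;u)$ is exactly \cref{thm:basic-inclusion}, which holds with no regularity assumption. The first link, $\cL_S^2\subseteq\aT_S$, is a restatement of parabolic arc-realizability: for each $w\in\cL_S^2(\bar x;u)$ that hypothesis supplies a semialgebraic arc $\gamma:(0,\varepsilon)\to S$ with $\gamma(t)=\bar x+tu+\tfrac12t^2w+o(t^2)$, and this is the defining property of $\aT_S(\bar x;u)$.

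The only link requiring an argument is $\aT_S(\bar x;u)\subseteq\iT_S(\bar x;u)$. Note that the preliminaries record only $\aT_S\subseteq\oT_S$, so this is the step where the strengthening to the inner set happens. Let $w\in\aT_S(\bar x;u)$ with arc $\gamma$, and let $t_k\downarrow0$ be arbitrary. Because $\gamma$ is defined on the entire interval $(0,\varepsilon)$, for all $k$ large enough that $t_k<\varepsilon$ we may set
\[
w_k:=\frac{2}{t_k^2}\bigl(\gamma(t_k)-\bar x-t_ku\bigr).
\]
With this choice $\bar x+t_ku+\tfrac12t_k^2w_k=\gamma(t_k)\in S$. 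The expansion of $\gamma$ gives $w_k=w+o(t_k^2)/t_k^2\to w$. For the finitely many initial indices with $t_k\ge\varepsilon$ we choose $w_k$ arbitrarily, for instance $w_k:=\frac{2}{t_k^2}(\bar x-\bar x-t_ku)$, so that the point is $\bar x\in S$; this does not affect convergence. Since the sequence $t_k\downarrow0$ was arbitrary, $w\in\iT_S(\bar x;u)$.

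The main subtlety is therefore not technical. The arc-generated set sits inside the inner set precisely because an arc furnishes feasible points for every small $t$, not merely along a particular sequence. The directional rank stability part of algebraic parabolic regularity is not needed for the set equalities themselves. In this plan it only plays its role indirectly, as the hypothesis one would use to verify arc-realizability in the concrete settings treated later (for example via an implicit-function or curve-selection construction). So I would state explicitly that the proof uses only the arc-realizability half together with \cref{thm:basic-inclusion}.
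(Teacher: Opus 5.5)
Your proposal is correct and is essentially the paper's proof: both obtain $\cL_S^2(\bar x;u)\subseteq\iT_S(\bar x;u)$ by sampling the arc supplied by arc-realizability along an arbitrary sequence $t_k\downarrow0$, then close the chain with \cref{thm:basic-inclusion}; like you, the paper never uses directional rank stability. The only differences are cosmetic. You isolate $\aT_S\subseteq\iT_S$ as its own step and handle the finitely many $t_k\geq\varepsilon$ explicitly, whereas the paper places $\aT_S$ by sandwiching it between $\cL_S^2$ and $\oT_S$.
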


\begin{remark}
The theorem separates two issues. The inclusion $\oT_S\subseteq\cL_S^2$ is purely Taylor-theoretic and always true. The reverse inclusion is geometric: it asserts that formal second-order corrections are realized by true semialgebraic arcs.
\end{remark}

\subsection{Smooth algebraic models}

\begin{theorem}[Hypersurface formula]\label{thm:hypersurface}
Let
\[
        S=\{x\in\R^n:p(x)=0\}
\]
near $\bar x$, where $p\in\R[x]$ and $\nabla p(\bar x)\neq0$. Then for every $u\in T_S(\bar x)$,
\[
        T_S^2(\bar x;u)=
        \{w\in\R^n:\nabla p(\bar x)w+u^T\nabla^2p(\bar x)u=0\}.
\]
Thus $S$ is parabolically regular at $\bar x$ in every tangent direction.
\end{theorem}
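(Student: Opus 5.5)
The plan is to prove the two inclusions separately. The inclusion $\subseteq$ follows from Theorem~\ref{thm:basic-inclusion} with $r=1$, $g=p$, $s=0$. Then $\cL_S^2(\bar x;u)$ is exactly the affine hyperplane $\{w:\nabla p(\bar x)w+u^T\nabla^2p(\bar x)u=0\}$. It is nonempty and of dimension $n-1$ because $\nabla p(\bar x)\neq0$. Along the way I will record that $T_S(\bar x)=\{u:\nabla p(\bar x)u=0\}$, by the implicit function theorem, so every $u$ in the statement satisfies the first-order condition.

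For $\supseteq$, I will not verify the abstract hypotheses of Theorem~\ref{thm:regularity-criterion}. Instead I will build an arc directly, which also gives inner and arc-generated realizability. Fix $w$ in the hyperplane and set $\nu:=\nabla p(\bar x)^T/|\nabla p(\bar x)|^2$. Look for
\[
\gamma(t)=\bar x+tu+\tfrac12t^2w+\sigma(t)\nu
\]
with a scalar correction $\sigma(t)$. Define $F(t,\sigma):=p(\bar x+tu+\tfrac12t^2w+\sigma\nu)$. It is a polynomial with $F(0,0)=0$ and $\partial_\sigma F(0,0)=\nabla p(\bar x)\nu=1\neq0$. The implicit function theorem gives a unique analytic $\sigma(t)$ near $t=0$ with $\sigma(0)=0$ and $F(t,\sigma(t))=0$. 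Its graph lies in the algebraic set $\{F=0\}$, so $\sigma$ is a Nash function and $\gamma$ is semialgebraic on $(0,\varepsilon)$.

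The key step is to show $\sigma(t)=o(t^2)$. Expand $F(t,0)=p(\bar x+tu+\tfrac12t^2w)$ by Taylor's formula:
\[
F(t,0)=t\,\nabla p(\bar x)u+\tfrac12t^2\big(\nabla p(\bar x)w+u^T\nabla^2p(\bar x)u\big)+O(t^3)=O(t^3),
\]
using both the first-order condition and the second-order condition. Since $\partial_\sigma F$ is bounded away from $0$ near the origin, the mean value theorem gives $|\sigma(t)|\le C|F(t,0)|=O(t^3)$. Hence $\gamma(t)=\bar x+tu+\tfrac12t^2w+o(t^2)$, so $w\in\aT_S(\bar x;u)$.

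For the inner set, take any $t_k\downarrow0$ and put $w_k:=w+2\sigma(t_k)\nu/t_k^2\to w$. Then $\bar x+t_ku+\tfrac12t_k^2w_k=\gamma(t_k)\in S$, so $w\in\iT_S(\bar x;u)$. Combining this with the inclusions $\iT_S\subseteq\oT_S$ and $\aT_S\subseteq\oT_S$ from the preliminaries, all three sets coincide with the hyperplane, which proves the formula and parabolic regularity.

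The main obstacle is the estimate $\sigma(t)=O(t^3)$. It needs the quantitative form of the implicit function theorem: a uniform lower bound on $\partial_\sigma F$ on a neighborhood, which holds by continuity. The phrase ``near $\bar x$'' also needs care: all arcs must stay inside the neighborhood where $S$ agrees with $\{p=0\}$, which holds for small $t$.
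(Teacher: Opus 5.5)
Your proof is correct, but for the reverse inclusion it takes a different route from the paper. The paper permutes coordinates so that $p_z(\bar x)\neq0$ and writes $S$ locally as a graph $z=\varphi(y)$. It uses the arc $\bigl(y(t),\varphi(y(t))\bigr)$ with $y(t)=\bar y+tu_y+\frac12t^2w_y$. It then differentiates the identity $p(y,\varphi(y))=0$ once and twice, to show that the hyperplane condition on $w$ is equivalent to $w_z=D\varphi(\bar y)w_y+D^2\varphi(\bar y)[u_y,u_y]$, i.e.\ to the $z$-component of the arc having the prescribed second-order coefficient.

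You instead keep the parabola $\bar x+tu+\frac12t^2w$ and correct it along the fixed normal $\nu$. The two conditions on $(u,w)$ are used exactly once, to make the residual $F(t,0)$ of order $O(t^3)$, and a uniform lower bound on $\partial_\sigma F$ transfers this to $\sigma(t)=O(t^3)$. Your route is coordinate-free and needs no computation of $D^2\varphi$. It also shows why precisely this hyperplane appears: it is the condition that kills the $t^2$ term of the residual. And it carries over verbatim to \cref{thm:complete-intersection}, replacing $\nu$ by a right inverse of $DG(\bar x)$ and $\sigma$ by an $\R^r$-valued correction; it is essentially a Lyusternik-type correction.

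The paper's graph approach instead gives an explicit parametrization of the second-order tangent set in graph coordinates. It also gets semialgebraicity of the arc directly from the Nash implicit function theorem. You need a small extra observation: by the uniqueness part of the implicit function theorem, $\{F=0\}$ coincides with the graph of $\sigma$ in a small box, so $\sigma$ is Nash. Your remark that the graph lies in $\{F=0\}$ is the right idea but should be stated this way.

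Finally, you explicitly verify membership in $\iT_S(\bar x;u)$ and $\aT_S(\bar x;u)$. The paper's proof only concludes $w\in T_S^2(\bar x;u)$ and leaves the parabolic-regularity claim to the same reasoning as in the proof of \cref{thm:regularity-criterion}.
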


\begin{theorem}[Regular complete intersections]\label{thm:complete-intersection}
Let
\[
        S=\{x\in\R^n:G(x)=0\},\qquad G=(G_1,\ldots,G_r),
\]
where $G_i\in\R[x]$. Assume that $\rank DG(\bar x)=r$. Then, for every $u\in T_S(\bar x)=\Ker DG(\bar x)$,
\[
        T_S^2(\bar x;u)=
        \{w\in\R^n:DG(\bar x)w+D^2G(\bar x)[u,u]=0\}.
\]
Consequently, regular real algebraic complete intersections are parabolically regular in every tangent direction.
\end{theorem}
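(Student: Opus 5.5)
The inclusion $\oT_S(\bar x;u)\subseteq\{w:DG(\bar x)w+D^2G(\bar x)[u,u]=0\}$ is the special case of \cref{thm:basic-inclusion} with no inequality constraints. I will also prove the identification $T_S(\bar x)=\Ker DG(\bar x)$ along the way. The inclusion $T_S(\bar x)\subseteq\Ker DG(\bar x)$ follows from first-order Taylor expansion along $\bar x+t_ku_k$. The reverse inclusion will come out of the construction below, by dropping the quadratic term. So the work is to show that every $w$ in the affine space
\[
\{w:DG(\bar x)w+D^2G(\bar x)[u,u]=0\}
\]
lies in $\aT_S(\bar x;u)$. Since $\aT_S\subseteq\iT_S$ would give the whole chain of equalities, I will in fact build an arc that works for every sequence $t_k\downarrow0$. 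This yields $\aT_S\subseteq\iT_S\subseteq\oT_S$ at once.

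The construction uses the implicit function theorem. Because $\rank DG(\bar x)=r$, pick a complement: let $A:=DG(\bar x)^T\in\R^{n\times r}$, so that $DG(\bar x)A$ is an invertible $r\times r$ matrix. Define
\[
F(t,z):=G\bigl(\bar x+tu+\tfrac12t^2w+t^2Az\bigr)\,t^{-2}\quad (t\neq0),
\]
extended appropriately at $t=0$. A cleaner route is to work directly with
\[
\Phi(t,y):=G\bigl(\bar x+tu+\tfrac12t^2w+Ay\bigr),\qquad (t,y)\in\R\times\R^r.
\]
We have $\Phi(0,0)=0$ and $D_y\Phi(0,0)=DG(\bar x)A$ is invertible. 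So there is a unique germ $y(t)$ with $\Phi(t,y(t))=0$, and since $G$ is polynomial, $y$ is real analytic and its graph is semialgebraic, being a branch of an algebraic set.

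It remains to show $y(t)=o(t^2)$. Expand
\[
\Phi(t,y)=DG(\bar x)\bigl(tu+\tfrac12t^2w+Ay\bigr)+\tfrac12D^2G(\bar x)[tu,tu]+O\bigl(t^3+|y|\,t+|y|^2\bigr).
\]
Using $DG(\bar x)u=0$ and $DG(\bar x)w+D^2G(\bar x)[u,u]=0$, the $t$ and $t^2$ terms cancel. This leaves
\[
0=DG(\bar x)Ay+O\bigl(t^3+|y|t+|y|^2\bigr).
\]
Because $DG(\bar x)A$ is invertible, this gives $|y(t)|\le C(t^3+|y|t+|y|^2)$. Since $y(t)\to0$, we can absorb the last two terms for small $t$ and conclude $y(t)=O(t^3)$.

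Hence $\gamma(t):=\bar x+tu+\tfrac12t^2w+Ay(t)$, restricted to $t\in(0,\varepsilon)$, is a semialgebraic arc in $S$ with the required expansion. Semialgebraicity holds because, after shrinking, the graph of $y$ is the intersection of $\{\Phi=0\}$ with a small box where the solution is unique. This shows $w\in\aT_S(\bar x;u)$. Moreover, for any $t_k\downarrow0$, the points $w_k:=w+2t_k^{-2}Ay(t_k)\to w$ satisfy $\bar x+t_ku+\tfrac12t_k^2w_k\in S$, so $w\in\iT_S(\bar x;u)$. Taking $w=0$ and dropping the quadratic term shows $\Ker DG(\bar x)\subseteq T_S(\bar x)$.

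Combining this with the basic inclusion gives
\[
\oT_S=\iT_S=\aT_S=\{w:DG(\bar x)w+D^2G(\bar x)[u,u]=0\},
\]
and parabolic regularity follows. Alternatively, one could note that directional rank stability holds automatically, since the rank stays $r$ near $\bar x$ by lower semicontinuity, and that the construction above is exactly parabolic arc-realizability, so \cref{thm:regularity-criterion} applies. The only delicate step is the order estimate $y(t)=o(t^2)$ together with the semialgebraicity of the implicit branch; everything else is routine.
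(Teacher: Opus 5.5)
Your proof is correct, but it runs the implicit-function argument differently from the paper.

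The paper splits coordinates $x=(y,z)$ with $D_zG(\bar x)$ invertible and writes $S$ locally as a graph $z=\Phi(y)$. It pushes the tangential parabola $\bar y+tu_y+\tfrac12t^2w_y$ through $\Phi$. It then identifies the second-order coefficient by differentiating $G(y,\Phi(y))\equiv0$ twice, so that the linearized equation becomes $w_z=D\Phi(\bar y)w_y+D^2\Phi(\bar y)[u_y,u_y]$.

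You instead fix $(u,w)$ and solve a one-parameter equation for a normal correction $DG(\bar x)^Ty(t)$ to the whole parabola. You never differentiate an implicit map. The hypotheses $DG(\bar x)u=0$ and $DG(\bar x)w+D^2G(\bar x)[u,u]=0$ cancel the $t$ and $t^2$ terms, and an absorption estimate gives $y(t)=O(t^3)$.

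The paper's chart is a single parametrization of $S$ serving all $(u,w)$ at once, and it exhibits $w_z$ as an explicit curvature term. Your route is coordinate-free and gives a quantitative $O(t^3)$ remainder. It also makes explicit three things the paper leaves tacit:
\begin{enumerate}
\item semialgebraicity of the realizing arc, whose graph is the zero set of the polynomial map $(t,y)\mapsto G(\bar x+tu+\tfrac12t^2w+DG(\bar x)^Ty)$ cut by a uniqueness box;
\item membership in $\iT_S(\bar x;u)$ along every sequence $t_k\downarrow0$;
\item the identity $T_S(\bar x)=\Ker DG(\bar x)$.
\end{enumerate}

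On the last point, with $w=0$ the $t^2$ terms no longer cancel, so your estimate yields only $y(t)=O(t^2)$. This still suffices for first-order tangency. Alternatively, take any $w$ in the affine set, which is nonempty because $DG(\bar x)$ is onto.
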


\begin{theorem}[Smooth inequality systems]\label{thm:inequality-system}
Let
\[
        S=\{x\in\R^n:g(x)=0,\ h(x)\leq0\},
\]
let $\bar x\in S$, and let $u\in T_S(\bar x)$. Suppose that the gradients
\[
        \nabla g_i(\bar x),\quad i=1,\ldots,r,
        \qquad
        \nabla h_j(\bar x),\quad j\in I_0(\bar x;u),
\]
are linearly independent and remain of constant rank on the active stratum selected by $u$. Then
\[
        T_S^2(\bar x;u)=\cL_S^2(\bar x;u).
\]
\end{theorem}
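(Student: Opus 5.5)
The inclusion $T_S^2(\bar x;u)\subseteq\cL_S^2(\bar x;u)$ is \cref{thm:basic-inclusion}, so only the reverse inclusion needs an argument. Fix $w\in\cL_S^2(\bar x;u)$. I will build a polynomial (hence semialgebraic) base arc that already has the prescribed second-order expansion, and then correct it by an implicit-function argument for the equalities together with the inequalities indexed by $I_0(\bar x;u)$. The remaining inequalities will be shown to take care of themselves.

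\textbf{Step 1 (splitting the active inequalities).} Put
\[
  F:=(g_1,\ldots,g_r,h_j\ (j\in J)),
\]
where $J$ is the set of $j\in I_0(\bar x;u)$ with $\nabla h_j(\bar x)w+u^T\nabla^2h_j(\bar x)u=0$. The other indices are handled by their sign:
\begin{itemize}[label={}]
  \item for $j\in I(\bar x)\setminus I_0(\bar x;u)$ we have $\nabla h_j(\bar x)u<0$, since $u\in T_S(\bar x)$ forces $\le0$ and we are outside $I_0$;
  \item for $j\in I_0(\bar x;u)\setminus J$ the second-order quantity is strictly negative;
  \item inactive constraints satisfy $h_j(\bar x)<0$.
\end{itemize}
All of these inequalities will therefore hold for small $t>0$ along any arc $\bar x+tu+\tfrac12t^2w+o(t^2)$.

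\textbf{Step 2 (correcting the base arc).} Let $x_0(t)=\bar x+tu+\tfrac12t^2w$. Taylor expansion together with $DF(\bar x)u=0$ (valid since $u\in T_S(\bar x)\subseteq\Ker Dg(\bar x)$ and $J\subseteq I_0$) and $DF(\bar x)w+D^2F(\bar x)[u,u]=0$ gives
\[
  F(x_0(t))=O(t^3).
\]
By the linear independence hypothesis, $DF(\bar x)$ has full row rank $m$. Set $A=DF(\bar x)^T$ and solve
\[
  \Phi(t,z):=F\bigl(x_0(t)+Az\bigr)=0,\qquad z\in\R^m .
\]
At $(0,0)$ we have $\partial_z\Phi=DF(\bar x)DF(\bar x)^T$, which is invertible. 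The implicit function theorem, applied to the polynomial map $\Phi$, yields a unique $z(t)$ with $z(0)=0$. Because $z$ is the graph of a semialgebraic set intersected with a small box, it is semialgebraic (by uniqueness and Tarski--Seidenberg). Moreover, $|z(t)|\le C|F(x_0(t))|=O(t^3)$, since $\Phi(t,\cdot)$ is uniformly invertible near $0$.

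Hence $\gamma(t)=x_0(t)+Az(t)$ satisfies $g(\gamma(t))=0$ and $h_j(\gamma(t))=0$ for $j\in J$. It also has the expansion $\bar x+tu+\tfrac12t^2w+O(t^3)$, so by Step 1 every other constraint holds for small $t$. Thus $\gamma(t)\in S$ and $w\in\aT_S(\bar x;u)$.

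\textbf{Step 3 (conclusion).} For an arbitrary sequence $t_k\downarrow0$, set $w_k:=2(\gamma(t_k)-\bar x-t_ku)/t_k^2\to w$. This shows $w\in\iT_S(\bar x;u)$. Combining with \cref{thm:basic-inclusion},
\[
  \cL_S^2\subseteq\aT_S\cap\iT_S\subseteq\oT_S\subseteq\cL_S^2 ,
\]
so all these sets coincide. The constant-rank clause of the hypothesis also yields directional rank stability, since linear independence persists near $\bar x$; one could alternatively conclude via \cref{thm:regularity-criterion}.

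\textbf{Main obstacle.} The delicate step is Step 1, and the key choice is to treat only the second-order active inequalities with \emph{equality} in $\cL_S^2$ as equations. The strict ones cannot be imposed as equations without destroying $w$. With this choice, the only remaining technical points are the $O(t^3)$ bound for the correction $z(t)$ and the semialgebraicity of the implicit solution.
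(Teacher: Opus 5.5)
Your argument is correct, and it takes a different and more careful route than the paper.

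The paper uses the constant-rank clause to straighten coordinates, so that the $g_i$ and the $h_j$, $j\in I_0(\bar x;u)$, become coordinate functions. It then treats the equalities as in \cref{thm:complete-intersection} and the $I_0$-inequalities as coordinate half-spaces. It discards $I(\bar x)\setminus I_0(\bar x;u)$ by the same first-order sign argument as your Step 1.

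You instead stay in the original coordinates and split $I_0(\bar x;u)$ according to whether the inequality defining $\cL_S^2(\bar x;u)$ is tight. You impose only $g$ and the tight $h_j$ as equations, and correct the polynomial arc along $DF(\bar x)^T$ by the implicit function theorem.

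Your route buys three things:
\begin{enumerate}
\item An explicit semialgebraic realizing arc with $O(t^3)$ error, so membership in $\aT_S$ and $\iT_S$ comes for free.
\item It uses only linear independence at $\bar x$, which shows the constant-rank clause is superfluous.
\item It confronts the one delicate case directly. The paper's sentence that ``second-order feasibility is equivalent to the nonpositivity of the coefficient'' is false along a general parabolic path when the coefficient vanishes, and is rescued only implicitly by the straightening.
\end{enumerate}

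In the paper's favour, the straightening can be made precise: take the arc to be exactly quadratic in the new coordinates, so each $I_0$-coordinate equals $\tfrac12t^2(\nabla h_j(\bar x)w+u^T\nabla^2h_j(\bar x)u)\le0$ with no remainder. Done this way, it handles tight and strict $I_0$-constraints uniformly without a case split. The cost is a second-order change-of-variables computation that the paper leaves unwritten.
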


\subsection{A stratified parabolic tangent formula}

The next result explains how second-order tangent data of a singular
semialgebraic set decomposes into branchwise contributions.  We recall that
every semialgebraic set admits a finite stratification by Nash manifolds; see
\cite[Theorem~9.1.8]{BochnakCosteRoy1998}.  The proof below uses only the
finiteness and disjointness of the chosen stratification, together with the
sequential definitions of the Bouligand and parabolic second-order tangent
sets.  For the latter constructions and their role in second-order analysis,
see \cite{BCS1999,RockafellarWets1998,MMS2021}.

\begin{definition}[Selected strata]\label{def:selected-strata}
Let $S\subset\R^n$ be a closed semialgebraic set and let
\[
        S=\bigsqcup_{\alpha\in A}M_\alpha
\]
be a finite $C^2$ semialgebraic stratification. For $\bar x\in S$ and $u\in T_S(\bar x)$, define the set of strata selected by $u$ as
\[
        A(\bar x,u):=
        \{\alpha\in A:\bar x\in\cl M_\alpha\text{ and }u\in T_{\cl M_\alpha}(\bar x)\}.
\]
For $\alpha\in A(\bar x,u)$, define the branchwise parabolic tangent set
\[
        T_{\alpha}^2S(\bar x;u):=T^2_{\cl M_\alpha}(\bar x;u).
\]
We also define $T_{\alpha}^{2,\mathrm{arc}}S(\bar x;u)$ as the set of all $w\in\R^n$ for which there exists a semialgebraic arc $\gamma:(0,\varepsilon)\to M_\alpha$ such that
\[
        \gamma(t)=\bar x+t u+\frac12t^2w+o(t^2).
\]
\end{definition}

\begin{theorem}[Stratified parabolic tangent formula]\label{thm:stratified-formula}
Let $S\subset\R^n$ be a closed semialgebraic set and let
\[
        S=\bigsqcup_{\alpha\in A}M_\alpha
\]
be a finite $C^2$ semialgebraic stratification. Let $\bar x\in S$ and $u\in T_S(\bar x)$. Then:
\begin{enumerate}[label=\textup{(\roman*)}]
\item The set $A(\bar x,u)$ is finite and nonempty.
\item One has
\[
        T_S^2(\bar x;u)\subseteq
        \bigcup_{\alpha\in A(\bar x,u)}T^2_{\cl M_\alpha}(\bar x;u).
\]
\item For every $\alpha\in A(\bar x,u)$,
\[
        T_{\alpha}^{2,\mathrm{arc}}S(\bar x;u)\subseteq T_S^2(\bar x;u).
\]
\item If every selected stratum is branchwise parabolically arc-regular at $(\bar x,u)$, namely
\[
        T^2_{\cl M_\alpha}(\bar x;u)=T_{\alpha}^{2,\mathrm{arc}}S(\bar x;u)
        \qquad\text{for all }\alpha\in A(\bar x,u),
\]
then
\[
        T_S^2(\bar x;u)=
        \bigcup_{\alpha\in A(\bar x,u)}T^2_{\cl M_\alpha}(\bar x;u).
\]
\item In particular, if $u$ selects a unique stratum $\alpha_0$ and this selected branch is branchwise parabolically arc-regular at $(\bar x,u)$, that is,
\[
T^2_{\cl M_{\alpha_0}}(\bar x;u)
=
T_{\alpha_0}^{2,\mathrm{arc}}S(\bar x;u),
\]
then
\[
        T_S^2(\bar x;u)=T^2_{\cl M_{\alpha_0}}(\bar x;u).
\]
\end{enumerate}
\end{theorem}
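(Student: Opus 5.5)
The argument is a pigeonhole over finitely many strata, combined with the sequential definitions; no deep geometry is needed. We treat the five items in order.

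\emph{Item (i).} Finiteness is immediate because $A(\bar x,u)\subseteq A$ and $A$ is finite. For nonemptiness, use $u\in T_S(\bar x)$: there are $t_k\downarrow0$ and $u_k\to u$ with $\bar x+t_ku_k\in S=\bigsqcup_\alpha M_\alpha$. Since there are finitely many strata, infinitely many of these points lie in a single $M_{\alpha}$. Passing to that subsequence gives $\bar x\in\cl M_\alpha$ (the points converge to $\bar x$) and $u\in T_{\cl M_\alpha}(\bar x)$. Hence $\alpha\in A(\bar x,u)$.

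\emph{Item (ii).} We run the same pigeonhole at second order. Let $w\in T_S^2(\bar x;u)$, with $t_k\downarrow0$, $w_k\to w$ and $x_k:=\bar x+t_ku+\tfrac12t_k^2w_k\in S$. Some stratum $M_\alpha$ contains $x_k$ for infinitely many $k$. Along that subsequence, $x_k\to\bar x$, so $\bar x\in\cl M_\alpha$. Moreover $x_k=\bar x+t_k(u+\tfrac12t_kw_k)$ with $u+\tfrac12t_kw_k\to u$, so $u\in T_{\cl M_\alpha}(\bar x)$, and therefore $\alpha\in A(\bar x,u)$. The same subsequence shows directly that $w\in T^2_{\cl M_\alpha}(\bar x;u)$, since $x_k\in M_\alpha\subseteq\cl M_\alpha$.

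\emph{Item (iii).} Take a semialgebraic arc $\gamma:(0,\varepsilon)\to M_\alpha\subseteq S$ with $\gamma(t)=\bar x+tu+\tfrac12t^2w+o(t^2)$. Define $w(t):=2t^{-2}(\gamma(t)-\bar x-tu)$, so that $w(t)\to w$. For any $t_k\downarrow0$, setting $w_k:=w(t_k)$ gives $\bar x+t_ku+\tfrac12t_k^2w_k=\gamma(t_k)\in S$. Hence $w\in\oT_S(\bar x;u)$. This argument in fact shows $w\in\iT_S(\bar x;u)$, which is a bonus.

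\emph{Items (iv) and (v).} For (iv), combine the two previous items. Item (ii) gives $T_S^2\subseteq\bigcup_{\alpha\in A(\bar x,u)}T^2_{\cl M_\alpha}$. Under the arc-regularity hypothesis each $T^2_{\cl M_\alpha}$ equals $T^{2,\mathrm{arc}}_\alpha S$, which is contained in $T_S^2$ by (iii). So the two sides are equal. Item (v) is the special case $A(\bar x,u)=\{\alpha_0\}$ of (iv); note that $A(\bar x,u)$ is nonempty by (i), so the union in (iv) is not vacuous.

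\emph{Main obstacle.} I expect no real obstacle; the only care needed is definitional. In (ii), the subsequence chosen for the stratum must simultaneously witness membership of $\alpha$ in $A(\bar x,u)$ and membership of $w$ in the outer second-order set; taking the same subsequence for both handles this. In (iv), the hypothesis must be read as stated, with $T^{2,\mathrm{arc}}_\alpha S$ built from arcs lying in $M_\alpha$ itself rather than in $\cl M_\alpha$; if the arcs were allowed in $\cl M_\alpha$, the closedness of $S$ would still give $\cl M_\alpha\subseteq S$, and (iii) would go through unchanged.
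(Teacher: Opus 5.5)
Your proposal is correct and follows essentially the same route as the paper. The paper also uses the pigeonhole/subsequence argument over the finitely many strata for (i) and (ii), the reparametrization $w(t)=2t^{-2}(\gamma(t)-\bar x-tu)$ for (iii), and the combination of (ii) and (iii) for (iv) and its one-stratum specialization (v).
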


\begin{remark}
The closures of strata are essential in \cref{thm:stratified-formula}. If $\bar x$ is singular, then the relevant branches are usually higher-dimensional strata whose closures contain $\bar x$, not the stratum containing $\bar x$ itself. For example, for a cusp or tacnode, the origin is a zero-dimensional stratum, while the local curve branches are one-dimensional strata approaching it.
\end{remark}

\section{Proofs of main results}\label{sec:proofs}

\begin{proof}[Proof of \cref{thm:basic-inclusion}]
Let $w\in\oT_S(\bar x;u)$. Then there exist $t_k\downarrow0$ and $w_k\to w$ such that
\[
        x_k:=\bar x+t_k u+\frac12t_k^2w_k\in S.
\]
Fix an equality constraint $g_i$. Taylor expansion at $\bar x$ gives
\begin{align*}
0=g_i(x_k)
&=g_i(\bar x)+\nabla g_i(\bar x)(x_k-\bar x)
  +\frac12(x_k-\bar x)^T\nabla^2g_i(\bar x)(x_k-\bar x)
  +o(\|x_k-\bar x\|^2)\\
&=t_k\nabla g_i(\bar x)u
+\frac12t_k^2\big(\nabla g_i(\bar x)w_k+u^T\nabla^2g_i(\bar x)u\big)
+o(t_k^2),
\end{align*}
where we used $g_i(\bar x)=0$ and boundedness of $w_k$. Dividing by $t_k$ and passing to the limit yields
\[
        \nabla g_i(\bar x)u=0.
\]
Returning to the expansion, dividing by $t_k^2/2$, and passing to the limit gives
\[
        \nabla g_i(\bar x)w+u^T\nabla^2g_i(\bar x)u=0.
\]

Now let $j\in I_0(\bar x;u)$. Since $h_j(x_k)\leq0$ and $h_j(\bar x)=0$, Taylor expansion gives
\[
0\geq h_j(x_k)
=t_k\nabla h_j(\bar x)u
+\frac12t_k^2\big(\nabla h_j(\bar x)w_k+u^T\nabla^2h_j(\bar x)u\big)
+o(t_k^2).
\]
Because $j\in I_0(\bar x;u)$, the first-order term vanishes. Dividing by $t_k^2/2$ and passing to the limit gives
\[
        \nabla h_j(\bar x)w+u^T\nabla^2h_j(\bar x)u\leq0.
\]
Thus $w\in\cL_S^2(\bar x;u)$.

It remains to prove semialgebraicity. The condition $w\in\oT_S(\bar x;u)$ is equivalent to the first-order formula over the ordered field of real numbers
\[
\forall \varepsilon>0\ \forall \delta>0\ \exists t\ \exists z
\left(
0<t<\delta,
\ \|z-w\|^2<\varepsilon^2,
\ \bar x+t u+\frac12t^2z\in S
\right).
\]
Since membership in $S$ is described by finitely many polynomial equalities and inequalities, the Tarski-Seidenberg theorem implies that the set of all such $w$ is semialgebraic.
\end{proof}

\begin{proof}[Proof of \cref{thm:regularity-criterion}]
By \cref{thm:basic-inclusion},
\[
        \oT_S(\bar x;u)\subseteq \cL_S^2(\bar x;u).
\]
Conversely, let $w\in\cL_S^2(\bar x;u)$. By parabolic arc-realizability, there exists a semialgebraic arc $\gamma:(0,\varepsilon)\to S$ such that
\[
        \gamma(t)=\bar x+t u+\frac12t^2w+o(t^2).
\]
Write
\[
        \gamma(t)=\bar x+t u+\frac12t^2w(t),
        \qquad w(t):=w+\frac{2}{t^2}o(t^2).
\]
Then $w(t)\to w$. Hence for every sequence $t_k\downarrow0$, putting $w_k=w(t_k)$ gives
\[
        w_k\to w,
        \qquad
        \bar x+t_k u+\frac12t_k^2w_k=\gamma(t_k)\in S.
\]
Therefore $w\in\iT_S(\bar x;u)$, and consequently
\[
        \cL_S^2(\bar x;u)\subseteq\iT_S(\bar x;u)
        \subseteq\oT_S(\bar x;u)
        \subseteq\cL_S^2(\bar x;u).
\]
Thus all three sets are equal. Since the realizing arc is semialgebraic, equality with $\aT_S(\bar x;u)$ also follows.
\end{proof}

\begin{proof}[Proof of \cref{thm:hypersurface}]
The inclusion from left to right follows from \cref{thm:basic-inclusion}. We prove the reverse inclusion. Let $w\in\R^n$ satisfy
\[
        \nabla p(\bar x)w+u^T\nabla^2p(\bar x)u=0.
\]
Since $\nabla p(\bar x)\neq0$, after a permutation of coordinates we may write $x=(y,z)\in\R^{n-1}\times\R$ and assume $p_z(\bar x)\neq0$. By the implicit function theorem, locally
\[
        S=\{(y,z):z=\varphi(y)\}
\]
for a real analytic semialgebraic function $\varphi$. Write $\bar x=(\bar y,\bar z)$, $u=(u_y,u_z)$, and $w=(w_y,w_z)$. Since $u\in T_S(\bar x)$,
\[
        u_z=D\varphi(\bar y)u_y.
\]
Define
\[
        y(t)=\bar y+t u_y+\frac12t^2w_y,
        \qquad
        z(t)=\varphi(y(t)).
\]
Then $\gamma(t)=(y(t),z(t))\in S$. Taylor expansion gives
\[
        z(t)=\bar z+tD\varphi(\bar y)u_y
        +\frac12t^2\big(D\varphi(\bar y)w_y+D^2\varphi(\bar y)[u_y,u_y]\big)+o(t^2).
\]
It remains to check that the second-order coefficient is $w_z$. Differentiating $p(y,\varphi(y))=0$ once gives
\[
        D_y p(\bar x)+p_z(\bar x)D\varphi(\bar y)=0.
\]
Differentiating twice in direction $u_y$ gives
\[
        u^T\nabla^2p(\bar x)u+p_z(\bar x)D^2\varphi(\bar y)[u_y,u_y]=0.
\]
The assumed identity for $w$ becomes
\[
        D_y p(\bar x)w_y+p_z(\bar x)w_z+u^T\nabla^2p(\bar x)u=0.
\]
Substituting the two previous identities yields
\[
        p_z(\bar x)\big(w_z-D\varphi(\bar y)w_y-D^2\varphi(\bar y)[u_y,u_y]\big)=0.
\]
Since $p_z(\bar x)\neq0$,
\[
        w_z=D\varphi(\bar y)w_y+D^2\varphi(\bar y)[u_y,u_y].
\]
Hence $\gamma(t)=\bar x+t u+\frac12t^2w+o(t^2)$, proving $w\in T_S^2(\bar x;u)$.
\end{proof}

\begin{proof}[Proof of \cref{thm:complete-intersection}]
Since $\rank DG(\bar x)=r$, after a permutation of coordinates we may write $x=(y,z)\in\R^{n-r}\times\R^r$ so that $D_zG(\bar x)$ is invertible. By the implicit function theorem, locally
\[
        S=\{(y,z):z=\Phi(y)\}
\]
for a real analytic semialgebraic map $\Phi$. Let $u=(u_y,u_z)$ and $w=(w_y,w_z)$ satisfy
\[
        DG(\bar x)w+D^2G(\bar x)[u,u]=0.
\]
Define
\[
        y(t)=\bar y+t u_y+\frac12t^2w_y,
        \qquad z(t)=\Phi(y(t)).
\]
Then $\gamma(t)=(y(t),z(t))\in S$. As in the hypersurface case, differentiating the identity $G(y,\Phi(y))=0$ once and twice shows that the condition
\[
        DG(\bar x)w+D^2G(\bar x)[u,u]=0
\]
is equivalent to
\[
        w_z=D\Phi(\bar y)w_y+D^2\Phi(\bar y)[u_y,u_y].
\]
Therefore
\[
        \gamma(t)=\bar x+t u+\frac12t^2w+o(t^2),
\]
so $w\in T_S^2(\bar x;u)$. The reverse inclusion follows from \cref{thm:basic-inclusion}.
\end{proof}

\begin{proof}[Proof of \cref{thm:inequality-system}]
Let
\[
        A_0=\{g_1,\ldots,g_r\}\cup\{h_j:j\in I_0(\bar x;u)\}.
\]
By the constant-rank assumption, the common zero set of the functions in $A_0$ is locally a $C^2$ semialgebraic manifold along the active stratum selected by $u$. Choose local coordinates in which the equality constraints and the second-order active inequality boundaries are represented by coordinate functions. In these coordinates, the equality constraints impose affine second-order equations, exactly as in \cref{thm:complete-intersection}, while the active inequalities become coordinate half-spaces.

For $j\in I_0(\bar x;u)$, the expansion of $h_j$ along a parabolic path gives
\[
        h_j\big(\bar x+t u+\tfrac12t^2w+o(t^2)\big)
        =\frac12t^2\big(\nabla h_j(\bar x)w+u^T\nabla^2h_j(\bar x)u\big)+o(t^2).
\]
Thus second-order feasibility is equivalent to the nonpositivity of the coefficient. If $j\in I(\bar x)\setminus I_0(\bar x;u)$, then $\nabla h_j(\bar x)u<0$, and hence
\[
        h_j(\bar x+t u+O(t^2))=t\nabla h_j(\bar x)u+O(t^2)<0
\]
for all small $t>0$. Such inequalities impose no second-order condition. Combining these facts yields
\[
        T_S^2(\bar x;u)=\cL_S^2(\bar x;u).
\]
\end{proof}

\begin{proof}[Proof of \cref{thm:stratified-formula}]
We prove the assertions in order.  Throughout the proof, tangent and
second-order tangent sets are understood in the sequential sense introduced
in Section~\ref{sec:preliminaries}.  The existence of the finite smooth
semialgebraic stratification used in the statement follows from
\cite[Theorem~9.1.8]{BochnakCosteRoy1998}; once such a stratification is
fixed, the argument below is elementary.

\smallskip
\noindent\emph{Proof of \textup{(i)}.}
Finiteness of $A(\bar x,u)$ follows immediately from the finiteness of the
index set $A$.  It remains to prove nonemptiness.  Since
$u\in T_S(\bar x)$, there exist sequences $t_k>0$ and $u_k\in\R^n$ such that
\[
        t_k\downarrow0,\qquad u_k\to u,\qquad
        x_k:=\bar x+t_ku_k\in S
\]
for every $k$.  Because
\[
        S=\bigsqcup_{\alpha\in A}M_\alpha
\]
is a finite disjoint union, at least one stratum contains infinitely many
terms of the sequence $(x_k)$.  After passing to this subsequence, which we
do not relabel, there exists $\alpha\in A$ such that
\[
        x_k\in M_\alpha\qquad\text{for all }k.
\]
As $x_k\to\bar x$, it follows that $\bar x\in\cl M_\alpha$.  Moreover,
\[
        x_k=\bar x+t_ku_k\in\cl M_\alpha,\qquad
        t_k\downarrow0,\qquad u_k\to u.
\]
By the sequential definition of the Bouligand tangent cone, this proves
\[
        u\in T_{\cl M_\alpha}(\bar x).
\]
Hence $\alpha\in A(\bar x,u)$, and $A(\bar x,u)$ is nonempty.

\smallskip
\noindent\emph{Proof of \textup{(ii)}.}
Let $w\in T_S^2(\bar x;u)$.  By definition, there exist sequences
$t_k>0$ and $w_k\in\R^n$ such that
\[
        t_k\downarrow0,\qquad w_k\to w,\qquad
        x_k:=\bar x+t_ku+\frac12t_k^2w_k\in S.
\]
Since the stratification has only finitely many strata, we can again pass to
a subsequence and find an index $\alpha\in A$ such that
\[
        x_k\in M_\alpha\qquad\text{for all }k.
\]
First, $x_k\to\bar x$, because $t_k\to0$ and $(w_k)$ is bounded.  Hence
$\bar x\in\cl M_\alpha$.  Second,
\[
        \frac{x_k-\bar x}{t_k}
        =
        u+\frac12t_kw_k
        \longrightarrow u.
\]
Since $x_k\in\cl M_\alpha$, the sequential definition of the Bouligand
tangent cone yields
\[
        u\in T_{\cl M_\alpha}(\bar x).
\]
Thus $\alpha\in A(\bar x,u)$.

The same sequence also witnesses the second-order tangency of $w$ to the
closure of this stratum: indeed,
\[
        x_k=\bar x+t_ku+\frac12t_k^2w_k\in\cl M_\alpha,
        \qquad t_k\downarrow0,\qquad w_k\to w.
\]
Therefore
\[
        w\in T^2_{\cl M_\alpha}(\bar x;u).
\]
Since $w\in T_S^2(\bar x;u)$ was arbitrary, we conclude that
\[
        T_S^2(\bar x;u)
        \subseteq
        \bigcup_{\alpha\in A(\bar x,u)}
        T^2_{\cl M_\alpha}(\bar x;u).
\]

Notice that no curve-selection argument is required here: the finiteness of
the stratification allows one to select a single stratum by an ordinary
subsequence argument.

\smallskip
\noindent\emph{Proof of \textup{(iii)}.}
Fix $\alpha\in A(\bar x,u)$ and let
\[
        w\in T_{\alpha}^{2,\mathrm{arc}}S(\bar x;u).
\]
By Definition~\ref{def:selected-strata}, there exist $\varepsilon>0$ and a
semialgebraic arc
\[
        \gamma:(0,\varepsilon)\longrightarrow M_\alpha\subset S
\]
such that
\[
        \gamma(t)
        =
        \bar x+tu+\frac12t^2w+r(t),
        \qquad
        \frac{\|r(t)\|}{t^2}\longrightarrow0
        \quad(t\downarrow0).
\]
Define
\[
        \widetilde w(t):=
        w+\frac{2r(t)}{t^2}.
\]
Then $\widetilde w(t)\to w$ and
\[
        \gamma(t)
        =
        \bar x+tu+\frac12t^2\widetilde w(t).
\]
Choose any sequence $t_k\downarrow0$ with $t_k<\varepsilon$ and put
$w_k:=\widetilde w(t_k)$.  We then have
\[
        w_k\to w,\qquad
        \bar x+t_ku+\frac12t_k^2w_k
        =
        \gamma(t_k)\in S.
\]
This is precisely the sequential condition for
$w\in T_S^2(\bar x;u)$.  Hence
\[
        T_{\alpha}^{2,\mathrm{arc}}S(\bar x;u)
        \subseteq T_S^2(\bar x;u).
\]

\smallskip
\noindent\emph{Proof of \textup{(iv)}.}
Assume that every selected stratum is branchwise parabolically
arc-regular, i.e.,
\[
        T^2_{\cl M_\alpha}(\bar x;u)
        =
        T_{\alpha}^{2,\mathrm{arc}}S(\bar x;u)
        \qquad
        \text{for every }\alpha\in A(\bar x,u).
\]
By part \textup{(iii)}, for each such $\alpha$,
\[
        T^2_{\cl M_\alpha}(\bar x;u)
        =
        T_{\alpha}^{2,\mathrm{arc}}S(\bar x;u)
        \subseteq T_S^2(\bar x;u).
\]
Taking the union over all selected strata gives
\[
        \bigcup_{\alpha\in A(\bar x,u)}
        T^2_{\cl M_\alpha}(\bar x;u)
        \subseteq T_S^2(\bar x;u).
\]
The reverse inclusion is exactly part \textup{(ii)}.  Consequently,
\[
        T_S^2(\bar x;u)
        =
        \bigcup_{\alpha\in A(\bar x,u)}
        T^2_{\cl M_\alpha}(\bar x;u).
\]

\smallskip
\noindent\emph{Proof of \textup{(v)}.}
If $A(\bar x,u)=\{\alpha_0\}$, then the union in part \textup{(iv)} contains
only one term.  Under the stated branchwise arc-regularity assumption,
part \textup{(iv)} therefore reduces to
\[
        T_S^2(\bar x;u)
        =
        T^2_{\cl M_{\alpha_0}}(\bar x;u).
\]
This completes the proof.
\end{proof}

\section{Examples and counterexamples}\label{sec:examples}

The following examples illustrate why parabolic second-order tangent sets are more informative than ordinary tangent cones and why algebraic parabolic regularity is a genuine condition.

\begin{example}[Curvature of a smooth algebraic branch]\label{ex:parabola}
Let
\[
        S=\{(x,y)\in\R^2:y-x^2=0\},\qquad \bar x=(0,0).
\]
For $u=(1,0)$, Theorem~\ref{thm:hypersurface} gives
\[
        T_S^2(0;u)=\{w=(w_1,w_2):w_2=2\}.
\]
The ordinary tangent cone is the $x$-axis, while the parabolic tangent set records the curvature of the branch. This is the model case behind the term ``parabolic'' tangent set.
\end{example}

\begin{example}[A curved feasible band]\label{ex:band}
Let
\[
        S=\{(x,y)\in\R^2:x^2\leq y\leq2x^2\}.
\]
At the origin the tangent cone is the $x$-axis. For $u=(1,0)$, a parabolic point has the form
\[
        (x(t),y(t))=(t,0)+\frac12t^2(w_1,w_2)+o(t^2).
\]
The two inequalities imply
\[
        x(t)^2\leq y(t)\leq2x(t)^2.
\]
Dividing by $t^2$ and passing to the limit gives
\[
        1\leq \frac12w_2\leq2,
\]
and hence
\[
        T_S^2(0;u)=\{w=(w_1,w_2):2\leq w_2\leq4\}.
\]
Thus a single tangent direction may carry a continuum of admissible second-order curvatures. This example will be used in Section~\ref{sec:applications} to distinguish strict quadratic growth from flat local minimality.
\end{example}

\begin{example}[Flat polynomial constraint]\label{ex:flat}
Let
\[
        S=\{(x,y)\in\R^2:y\geq x^4\}
        =\{(x,y):x^4-y\leq0\}.
\]
At $\bar x=(0,0)$ and $u=(1,0)$, the active constraint is $h(x,y)=x^4-y\leq0$. Since
\[
        \nabla h(0,0)=(0,-1),\qquad
        u^T\nabla^2h(0,0)u=0,
\]
the algebraic second-order condition is
\[
        -w_2\leq0,
        \qquad\text{that is,}\qquad w_2\geq0.
\]
Indeed,
\[
        T_S^2(0;u)=\{w=(w_1,w_2):w_2\geq0\}.
\]
The boundary $y=x^4$ is invisible at second order in the direction $u$ because its first nonzero curvature appears at order four. This example explains why second-order conditions can prove local minimality but fail to prove quadratic growth.
\end{example}

\begin{example}[Tacnode and branchwise second-order data]\label{ex:tacnode}
Let
\[
        S=\{(x,y)\in\R^2:y^2=x^4\}
        =S_+\cup S_-,
        \qquad
        S_\pm=\{(x,y):y=\pm x^2\}.
\]
At the origin, both branches have the same tangent direction $u=(1,0)$. The upper branch gives $w_2=2$, while the lower branch gives $w_2=-2$. Therefore
\[
        T_S^2(0;u)=\{w:w_2=2\}\cup\{w:w_2=-2\}.
\]
This example is not used below as a separate singularity application, but it is important for optimization: if such a set occurs as a feasible region, second-order tests must be applied branchwise rather than to a single linearized equation.
\end{example}

\begin{example}[Cusp and failure of ordinary quadratic scaling]\label{ex:cusp}
Let
\[
        S=\{(x,y)\in\R^2:y^2=x^3,
        \ x\geq0\}.
\]
The parametrization $\gamma(s)=(s^2,s^3)$ gives, after writing $t=s^2$,
\[
        \gamma(t)=(t,t^{3/2}).
\]
The deviation from $t(1,0)$ is of order $t^{3/2}$, not $t^2$. Thus the ordinary parabolic scale does not capture the first nontrivial correction of this cusp. In constrained optimization this means that a purely quadratic test may be inconclusive along such a branch; higher-order or fractional-order information may be required.
\end{example}

\begin{example}[Complementarity-type feasible set]\label{ex:complementarity}
Let
\[
        S=\{(x,y)\in\R^2:x\geq0,
        \ y\geq0,
        \ xy=0\}.
\]
At the origin,
\[
        T_S(0)=\R_+(1,0)\cup\R_+(0,1).
\]
For $u=(1,0)$, the selected branch is the positive $x$-axis, and
\[
        T_S^2(0;u)=\{w=(w_1,w_2):w_2=0\}.
\]
For $u=(0,1)$,
\[
        T_S^2(0;u)=\{w=(w_1,w_2):w_1=0\}.
\]
The naive equation $xy=0$ has zero gradient at the origin and gives no second-order restriction. The correct parabolic tangent set is obtained only after recognizing the branch structure of the feasible set.
\end{example}
\section{Applications to polynomial optimization}\label{sec:applications}

We now develop the local optimization consequences of the preceding tangent formulas. Throughout this section,
\[
        \min f(x)\quad\text{subject to}\quad x\in S
\]
where $f\in\R[x]$ and $S\subset\R^n$ is a locally closed basic semialgebraic set. The global theory of polynomial optimization often relies on moment and sums-of-squares relaxations, as initiated by Lasserre \cite{Lasserre2001}, or on algebraic critical varieties such as the gradient ideal method of Nie, Demmel, and Sturmfels \cite{NieDemmelSturmfels2006}. The results below are local. They provide checkable second-order tests near a feasible point once the parabolic tangent geometry of $S$ is understood.

A tangent direction $u\in T_S(\bar x)$ is called critical for $f$ at $\bar x$ if
\[
        \nabla f(\bar x)u=0.
\]
For a feasible parabolic arc
\[
        x(t)=\bar x+t u+\frac12t^2w+o(t^2),
\]
Taylor expansion gives
\[
        f(x(t))=f(\bar x)+t\nabla f(\bar x)u
        +\frac12t^2\big(\nabla f(\bar x)w
        +u^T\nabla^2f(\bar x)u\big)+o(t^2).
\]
We write
\[
        Q_f(\bar x;u,w)
        :=\nabla f(\bar x)w+u^T\nabla^2f(\bar x)u.
\]

\begin{theorem}[Second-order necessary condition]\label{thm:necessary-opt}
If $\bar x$ is a local minimizer of $f$ over $S$, then for every critical direction $u\in T_S(\bar x)$ and every $w\in T_S^2(\bar x;u)$,
\[
        Q_f(\bar x;u,w)\geq0.
\]
If $S$ is algebraically parabolically regular at $(\bar x,u)$, then it is enough to test all $w\in\cL_S^2(\bar x;u)$.
\end{theorem}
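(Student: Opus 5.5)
I argue by contradiction, using the sequential definition of $\oT_S(\bar x;u)$ together with a second-order Taylor expansion of $f$ at $\bar x$. Fix a critical direction $u\in T_S(\bar x)$ and $w\in T_S^2(\bar x;u)$. By definition there are $t_k\downarrow0$ and $w_k\to w$ with
\[
        x_k:=\bar x+t_ku+\tfrac12t_k^2w_k\in S .
\]
Since $(w_k)$ is bounded, $x_k\to\bar x$. Hence $x_k$ lies in the neighborhood on which $\bar x$ minimizes $f$ over $S$, so $f(x_k)\geq f(\bar x)$ for all large $k$.

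Next I expand $f(x_k)$ exactly as in the proof of \cref{thm:basic-inclusion}. Because $f$ is a polynomial, Taylor's formula with $\|x_k-\bar x\|=O(t_k)$ gives
\[
        f(x_k)=f(\bar x)+t_k\nabla f(\bar x)u+\tfrac12t_k^2\big(\nabla f(\bar x)w_k+u^T\nabla^2f(\bar x)u\big)+o(t_k^2).
\]
The cross terms $t_k^3u^T\nabla^2f(\bar x)w_k$ and $t_k^4w_k^T\nabla^2f(\bar x)w_k$ are absorbed into $o(t_k^2)$ by boundedness of $w_k$. Criticality kills the first-order term. Dividing $0\le f(x_k)-f(\bar x)$ by $t_k^2/2$ and letting $k\to\infty$ then yields
\[
        0\le \nabla f(\bar x)w+u^T\nabla^2f(\bar x)u=Q_f(\bar x;u,w).
\]
The only point needing care is the uniformity of the remainder, namely that $o(\|x_k-\bar x\|^2)=o(t_k^2)$. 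This is immediate because $\|x_k-\bar x\|\le t_k\|u\|+\tfrac12t_k^2\sup_k\|w_k\|$. I therefore expect no real obstacle in this part.

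For the second assertion I invoke \cref{thm:regularity-criterion}. Under algebraic parabolic regularity at $(\bar x,u)$ it gives $\oT_S(\bar x;u)=\cL_S^2(\bar x;u)$. The condition ``$Q_f\ge0$ for all $w\in T_S^2(\bar x;u)$'' is therefore literally the same as ``$Q_f\ge0$ for all $w\in\cL_S^2(\bar x;u)$''. The inclusion $\cL_S^2\subseteq T_S^2$ is the direction that matters: it ensures that testing over the algebraic polyhedron $\cL_S^2(\bar x;u)$ loses nothing. I would also note that, alternatively, one can use the realizing semialgebraic arc $\gamma$ directly. Expanding $f(\gamma(t))$ gives the same inequality without passing through sequences.
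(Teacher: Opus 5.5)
Your proof is correct and essentially matches the paper's. You take $x_k=\bar x+t_ku+\frac12t_k^2w_k\in S$ from the definition of $T_S^2(\bar x;u)$, Taylor-expand $f$, kill the first-order term by criticality, divide by $t_k^2/2$, pass to the limit, and then invoke Theorem~\ref{thm:regularity-criterion} to get $T_S^2(\bar x;u)=\cL_S^2(\bar x;u)$. The only blemish is your opening phrase ``by contradiction'': the argument you actually give is direct, as is the paper's.
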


\begin{proof}
Let $u$ be critical and let $w\in T_S^2(\bar x;u)$. Choose $t_k\downarrow0$ and $w_k\to w$ such that
\[
        x_k=\bar x+t_k u+\frac12t_k^2w_k\in S.
\]
Since $\bar x$ is a local minimizer, $f(x_k)-f(\bar x)\geq0$ for all large $k$. Taylor expansion gives
\[
        f(x_k)-f(\bar x)
        =t_k\nabla f(\bar x)u
        +\frac12t_k^2\big(\nabla f(\bar x)w_k
        +u^T\nabla^2f(\bar x)u\big)
        +o(t_k^2).
\]
The first-order term vanishes because $u$ is critical. Dividing by $t_k^2/2$ and passing to the limit yields $Q_f(\bar x;u,w)\geq0$. If $S$ is algebraically parabolically regular, then $T_S^2(\bar x;u)=\cL_S^2(\bar x;u)$ by Theorem~3.2.
\end{proof}

\begin{theorem}[A sufficient condition for quadratic growth]\label{thm:quadratic-growth}
Assume that $S$ is locally closed, that $\bar x$ satisfies the first-order condition
\[
        \nabla f(\bar x)v\geq0
        \qquad\text{for all }v\in T_S(\bar x),
\]
and that $S$ is algebraically parabolically regular at $\bar x$ in every nonzero critical direction. Suppose that there exists $\eta>0$ such that
\[
        Q_f(\bar x;u,w)\geq\eta
\]
for every critical direction $u\in T_S(\bar x)$ with $\|u\|=1$ and every $w\in\cL_S^2(\bar x;u)$. Then $\bar x$ satisfies quadratic growth: there exist $c>0$ and $\varepsilon>0$ such that
\[
        f(x)\geq f(\bar x)+c\|x-\bar x\|^2
\]
for all $x\in S\cap B(\bar x,\varepsilon)$.
\end{theorem}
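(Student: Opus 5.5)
We argue by contradiction, following the standard compactness argument for second-order sufficient conditions. Suppose quadratic growth fails. Then there are feasible points $x_k\in S$ with $x_k\to\bar x$, $x_k\neq\bar x$, and
\[
        f(x_k)<f(\bar x)+\tfrac1k\|x_k-\bar x\|^2 .
\]
Put $t_k:=\|x_k-\bar x\|$ and $u_k:=(x_k-\bar x)/t_k$. After passing to a subsequence, $u_k\to u$ with $\|u\|=1$, so $u\in T_S(\bar x)$. A first-order Taylor expansion gives $f(x_k)-f(\bar x)=t_k\nabla f(\bar x)u_k+O(t_k^2)$. Dividing by $t_k$ and combining with the first-order condition yields $\nabla f(\bar x)u=0$. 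Hence $u$ is a unit critical direction.

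The next step is to extract a second-order correction. Write
\[
        x_k=\bar x+t_ku+\tfrac12t_k^2w_k,\qquad w_k:=\frac{2(u_k-u)}{t_k}.
\]
If $(w_k)$ has a bounded subsequence, then $w_k\to w$ along it, and $w\in\oT_S(\bar x;u)=\cL_S^2(\bar x;u)$ by \cref{thm:regularity-criterion}. Expanding $f$ to second order at $x_k$ and dividing by $t_k^2/2$ gives
\[
        t_k^{-1}\cdot 2\nabla f(\bar x)u_k+\nabla f(\bar x)w_k+u_k^T\nabla^2 f(\bar x)u_k+o(1)<\tfrac2k .
\]
Since $t_k\nabla f(\bar x)u_k\cdot$-type terms need care, we use $\nabla f(\bar x)u=0$ to rewrite $2t_k^{-1}\nabla f(\bar x)u_k=\nabla f(\bar x)w_k$. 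The left side then reads $Q_f(\bar x;u,w_k)+o(1)$. In the limit this gives $Q_f(\bar x;u,w)\le0<\eta$, which contradicts the hypothesis.

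The main obstacle is the unbounded case $\|w_k\|\to\infty$, i.e. $\|u_k-u\|\gg t_k$. Here the ordinary parabolic scale fails, exactly as in the cusp of \cref{ex:cusp}, and I expect to need genuinely more than the stated hypotheses. My first attempt is a rescaling. Set $d_k:=(u_k-u)/\|u_k-u\|\to d$. From the expansion of $f$, together with $\nabla f(\bar x)u_k\ge0$ in the limit, we get $\nabla f(\bar x)d=0$ and $d\perp u$. Linearizing the constraints of $\cA_0(\bar x;u)$ shows that $d$ lies in the critical cone of the linearization.

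One would then like to use directional rank stability. The arcs through $x_k$ have first-order direction $u_k\in V$, so the active gradients have constant rank $\rho$ along them. Via a constant-rank (implicit function) reparametrization, this should let us replace $x_k$ by points $x_k'=\bar x+t_ku+\tfrac12t_k^2w_k'$ with $w_k'$ bounded and with $f(x_k')-f(x_k)=o(t_k^2)$. Equivalently, one could use parabolic regularity at the nearby directions $u_k$, together with a uniform version of the estimate over the unit critical sphere, which is compact; this would yield $f(x_k)\ge f(\bar x)+\tfrac{\eta}{2}t_k^2$ directly.

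I am not confident this step closes. Pure arc-realizability at each fixed $u$ gives no uniformity, so this is where the argument most likely needs a tacit uniformity hypothesis, or the classical Bonnans–Cominetti–Shapiro second-order regularity. If so, the proof will state it explicitly at that point rather than claim it follows from the hypotheses as given.
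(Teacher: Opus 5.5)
Your bounded case is correct, apart from two small points. The displayed inequality contains the first-order term twice; after your substitution $2t_k^{-1}\nabla f(\bar x)u_k=\nabla f(\bar x)w_k$ only one copy should remain. Also, $w\in\cL_S^2(\bar x;u)$ needs only the inclusion of \cref{thm:basic-inclusion}, not regularity.

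The genuine gap is the case $\|w_k\|\to\infty$, which you leave open, and neither of your suggested repairs works. Directional rank stability only constrains arcs of the form $\bar x+tv+O(t^2)$, and points with $\|u_k-u\|\gg t_k$ are exactly not of that form. Arc-realizability is pointwise in $u$ and gives no uniformity. The paper's own proof does not supply this step either. It selects a semialgebraic arc violating a fixed growth estimate and writes it as $\bar x+tu+\frac12t^2w+o(t^2)$, which silently discards Puiseux exponents in $(1,2)$ (the cusp situation of \cref{ex:cusp}), i.e.\ precisely your unbounded case. Your conclusion that an extra hypothesis (uniformity, or Bonnans--Cominetti--Shapiro second-order regularity) is needed is nevertheless wrong. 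The hypothesis is imposed on $\cL_S^2(\bar x;u)$, a polyhedron cut out by the gradients and Hessians of the defining polynomials, and this linear structure is what makes the statement true as given.

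The missing idea is linear-programming duality. Fix the limit direction $u$.

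\emph{Nonemptiness.} First, $\cL_S^2(\bar x;u)\neq\emptyset$. Otherwise Farkas' lemma gives $\lambda\in\R^r$ and $\mu\in\R^s_+$ with $\mu_j=0$ for $j\notin I_0(\bar x;u)$, $Dg(\bar x)^T\lambda+Dh(\bar x)^T\mu=0$, and $u^T\nabla^2\varphi(\bar x)u>0$ for $\varphi:=\lambda^Tg+\mu^Th$. Then $0\geq\varphi(x_k)=\frac12t_k^2u_k^T\nabla^2\varphi(\bar x)u_k+o(t_k^2)>0$ for large $k$, which is absurd.

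\emph{Multiplier.} The map $w\mapsto Q_f(\bar x;u,w)$ is affine and bounded below by $\eta$ on this nonempty polyhedron. Strong duality therefore yields $(\lambda,\mu)$ with the same support condition, $\nabla f(\bar x)+Dg(\bar x)^T\lambda+Dh(\bar x)^T\mu=0$, and $u^T\nabla^2_{xx}L(\bar x,\lambda,\mu)u=\inf_{w\in\cL_S^2(\bar x;u)}Q_f(\bar x;u,w)\geq\eta$. This is the converse of \cref{prop:lagrangian-form}.

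\emph{Contradiction.} Using $g(x_k)=0$, $\mu_jh_j(x_k)\leq0$, $L(\bar x,\lambda,\mu)=f(\bar x)$ and $\nabla_xL(\bar x,\lambda,\mu)=0$, you get
\[
f(x_k)-f(\bar x)\geq L(x_k,\lambda,\mu)-L(\bar x,\lambda,\mu)=\tfrac12t_k^2u_k^T\nabla^2_{xx}L(\bar x,\lambda,\mu)u_k+o(t_k^2)\geq\tfrac{\eta}{4}t_k^2
\]
for large $k$. This contradicts $f(x_k)<f(\bar x)+\frac1kt_k^2$.

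This argument never introduces $w_k$, so the bounded/unbounded dichotomy disappears, and it uses neither directional rank stability nor arc-realizability. The same Lagrangian estimate, applied along the selected arc, also repairs the paper's curve-selection argument.
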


\begin{proof}
Suppose by contradiction that quadratic growth fails. Then there exists a sequence $x_k\in S$, $x_k\to\bar x$, $x_k\neq\bar x$, such that
\[
        f(x_k)<f(\bar x)+\frac1k\|x_k-\bar x\|^2.
\]
Set $t_k=\|x_k-\bar x\|$ and $u_k=(x_k-\bar x)/t_k$. Passing to a subsequence, $u_k\to u$ with $\|u\|=1$ and $u\in T_S(\bar x)$. Taylor expansion gives
\[
        f(x_k)-f(\bar x)=t_k\nabla f(\bar x)u_k+O(t_k^2).
\]
Dividing by $t_k$ and passing to the limit gives $\nabla f(\bar x)u\leq0$. The first-order condition gives $\nabla f(\bar x)u\geq0$, so $u$ is critical.

Because $S$ and $f$ are semialgebraic, the set of points violating any fixed quadratic growth estimate is semialgebraic. By the curve selection lemma there exists a semialgebraic arc $\gamma(t)\in S$, $\gamma(t)\to\bar x$, along which quadratic growth fails. Reparametrizing by distance and taking the first two terms of the semialgebraic expansion, we may write
\[
        \gamma(t)=\bar x+t u+\frac12t^2w+o(t^2)
\]
with $\|u\|=1$ and $w\in T_S^2(\bar x;u)$. By parabolic regularity, $w\in\cL_S^2(\bar x;u)$. Along this arc,
\[
        f(\gamma(t))=f(\bar x)+\frac12t^2Q_f(\bar x;u,w)+o(t^2).
\]
The hypothesis gives $Q_f(\bar x;u,w)\geq\eta$, and hence
\[
        f(\gamma(t))\geq f(\bar x)+\frac{\eta}{4}t^2
\]
for all sufficiently small $t>0$, contradicting the failure of quadratic growth along $\gamma$.
\end{proof}

\begin{proposition}\label{prop:lagrangian-form}
Let
\[
        S=\{x:g(x)=0,
        \ h(x)\leq0\}
\]
and assume that the hypotheses of Theorem~ \ref{thm:inequality-system} hold at $(\bar x,u)$ for every nonzero critical direction. Suppose that there are multipliers $\lambda\in\R^r$ and $\mu\in\R_+^s$ satisfying
\[
        \nabla f(\bar x)+Dg(\bar x)^T\lambda+Dh(\bar x)^T\mu=0,
        \qquad
        \mu_jh_j(\bar x)=0.
\]
Let
\[
        L(x,\lambda,\mu)=f(x)+\sum_{i=1}^r\lambda_i g_i(x)+\sum_{j=1}^s\mu_jh_j(x).
\]
For every critical direction $u$ and every $w\in\cL_S^2(\bar x;u)$,
\[
        Q_f(\bar x;u,w)
        =u^T\nabla_{xx}^2L(\bar x,\lambda,
        \mu)u
        -\sum_{j\in I_0(\bar x;u)}
        \mu_j\big(\nabla h_j(\bar x)w
        +u^T\nabla^2h_j(\bar x)u\big).
\]
Consequently, since the terms in parentheses are nonpositive and $\mu_j\geq0$,
\[
        Q_f(\bar x;u,w)
        \geq u^T\nabla_{xx}^2L(\bar x,\lambda,\mu)u.
\]
In particular, if
\[
        u^T\nabla_{xx}^2L(\bar x,\lambda,\mu)u>0
\]
uniformly on all unit nonzero critical directions, then the quadratic growth conclusion of Theorem~\ref{thm:quadratic-growth} holds.
\end{proposition}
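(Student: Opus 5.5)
The plan is to obtain the identity by direct algebraic substitution of the stationarity equation into $Q_f$, and then to reduce the growth claim to \cref{thm:quadratic-growth}.

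\textbf{Step 1 (rewriting $\nabla f(\bar x)w$).} Multiply the stationarity condition $\nabla f(\bar x)+Dg(\bar x)^T\lambda+Dh(\bar x)^T\mu=0$ by $w$ to get
\[
\nabla f(\bar x)w=-\sum_i\lambda_i\nabla g_i(\bar x)w-\sum_j\mu_j\nabla h_j(\bar x)w.
\]
Complementarity forces $\mu_j=0$ for $j\notin I(\bar x)$, so only active $j$ appear.

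\textbf{Step 2 (removing inactive-at-second-order indices).} The sum should run only over $I_0(\bar x;u)$. For $u$ critical, stationarity gives $0=\nabla f(\bar x)u=-\sum_j\mu_j\nabla h_j(\bar x)u$, using $Dg(\bar x)u=0$. Each term $\mu_j\nabla h_j(\bar x)u$ is $\le0$ for active $j$, because $u\in T_S(\bar x)$ implies $\nabla h_j(\bar x)u\le0$ (Taylor at first order, as in the proof of \cref{thm:basic-inclusion}). Hence every term vanishes, so $\mu_j=0$ whenever $j\in I(\bar x)\setminus I_0(\bar x;u)$.

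\textbf{Step 3 (assembling the identity).} For $w\in\cL_S^2(\bar x;u)$, substitute $\nabla g_i(\bar x)w=-u^T\nabla^2g_i(\bar x)u$. Then add and subtract $\sum_{j\in I_0}\mu_j u^T\nabla^2h_j(\bar x)u$. The Hessian terms combine into $u^T\nabla^2_{xx}L(\bar x,\lambda,\mu)u$, and the remaining terms form $-\sum_{j\in I_0}\mu_j(\nabla h_j(\bar x)w+u^T\nabla^2h_j(\bar x)u)$. Each parenthesis is $\le0$ by the definition of $\cL_S^2$, and $\mu_j\ge0$, which gives the inequality $Q_f\ge u^T\nabla^2_{xx}Lu$.

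\textbf{Step 4 (quadratic growth).} Uniform positivity on unit critical directions yields some $\eta>0$ with $Q_f(\bar x;u,w)\ge\eta$ for every $w\in\cL_S^2(\bar x;u)$. To invoke \cref{thm:quadratic-growth}, two hypotheses remain to be checked.
\begin{enumerate}[label=(\alph*)]
\item The first-order condition $\nabla f(\bar x)v\ge0$ on $T_S(\bar x)$. This follows from the same sign argument as in Step 2: $\nabla f(\bar x)v=-\sum_j\mu_j\nabla h_j(\bar x)v\ge0$.
\item Algebraic parabolic regularity in each critical direction. Here \cref{thm:inequality-system} gives $T_S^2=\cL_S^2$.
\end{enumerate}

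\textbf{Main obstacle.} Item (b) is the step I expect to be delicate. \Cref{thm:quadratic-growth} is phrased with algebraic parabolic regularity, but its proof only uses the equality $T_S^2(\bar x;u)=\cL_S^2(\bar x;u)$ (indeed only the inclusion supplied by \cref{thm:basic-inclusion}) together with the lower bound on $\cL_S^2$. I would therefore argue that this equality suffices, rather than try to verify directional rank stability separately. I would also note that the uniformity "uniformly on unit critical directions" is exactly what is needed for a uniform $\eta$.
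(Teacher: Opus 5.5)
Your proposal is correct and follows the paper's route: substitute the stationarity equation and the equality part of $\cL_S^2(\bar x;u)$ into $Q_f$, use $a_j\le0$ and $\mu_j\ge0$, and then appeal to \cref{thm:quadratic-growth}. Your Step~2 (criticality of $u$ forces $\mu_j=0$ on $I(\bar x)\setminus I_0(\bar x;u)$) and your Step~4(a) make explicit two points that the paper's proof only asserts. If you prefer not to rely on the internals of the proof of \cref{thm:quadratic-growth} in Step~4(b), argue directly: $f\ge L(\cdot,\lambda,\mu)$ on $S$, with equality and $\nabla_xL=0$ at $\bar x$, so any violating sequence $x_k=\bar x+t_ku_k$ satisfies $f(x_k)-f(\bar x)\ge\frac12t_k^2u_k^T\nabla_{xx}^2L(\bar x,\lambda,\mu)u_k+o(t_k^2)$ with $u_k\to u$ critical, which contradicts the uniform positivity.
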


\begin{proof}
By stationarity,
\[
        \nabla f(\bar x)w
        =-\lambda^TDg(\bar x)w-\mu^TDh(\bar x)w.
\]
Using the equality part of $w\in\cL_S^2(\bar x;u)$,
\[
        Dg(\bar x)w=-D^2g(\bar x)[u,u].
\]
For $j\notin I_0(\bar x;u)$, either $h_j$ is inactive at $\bar x$ and $\mu_j=0$, or it is first-order inactive along $u$; such indices do not enter the second-order active system. For $j\in I_0(\bar x;u)$, put
\[
        a_j:=\nabla h_j(\bar x)w+u^T\nabla^2h_j(\bar x)u\leq0.
\]
Substituting these identities into $Q_f$ gives exactly the displayed formula. The inequality follows from $a_j\leq0$ and $\mu_j\geq0$. The final assertion follows from Theorem~\ref{thm:quadratic-growth}.
\end{proof}

\begin{example}[A threshold family on a curved feasible band]\label{ex:threshold-band}
Let
\[
        S=\{(x,y):x^2\leq y\leq2x^2\},
        \qquad
        f_\alpha(x,y)=y-\alpha x^2,
\]
where $\alpha\in\R$. At $\bar x=(0,0)$ and $u=(1,0)$,
\[
        T_S^2(0;u)=\{w:2\leq w_2\leq4\}.
\]
Since $\nabla f_\alpha(0,0)=(0,1)$ and $u^T\nabla^2f_\alpha(0,0)u=-2\alpha$,
\[
        Q_{f_\alpha}(0;u,w)=w_2-2\alpha.
\]
Thus
\[
        \inf_{w\in T_S^2(0;u)}Q_{f_\alpha}(0;u,w)=2-2\alpha.
\]
If $\alpha<1$, this infimum is positive and Theorem~\ref{thm:quadratic-growth} detects quadratic growth. Indeed,
\[
        f_\alpha(x,y)\geq (1-\alpha)x^2
\]
on $S$ near the origin. If $\alpha=1$, the infimum is zero and quadratic growth fails because $f_1(x,x^2)=0$. If $\alpha>1$, then $f_\alpha(x,x^2)=(1-\alpha)x^2<0$ for $x\neq0$, so the origin is not a local minimizer. Hence the parabolic tangent set gives the exact threshold.
\end{example}

\begin{example}[Flat local minimizer without quadratic growth]\label{ex:flat-optimization}
Let
\[
        S=\{(x,y):y\geq x^4\},
        \qquad f(x,y)=y.
\]
At the origin, $f\geq0$ on $S$, so the origin is a local minimizer. Along $u=(1,0)$, Example~\ref{ex:flat} gives
\[
        T_S^2(0;u)=\{w:w_2\geq0\}.
\]
Here $Q_f(0;u,w)=w_2$. The minimum value of $Q_f$ on the parabolic tangent set is zero. This agrees with the true behavior
\[
        f(x,x^4)=x^4,
\]
which gives local minimality but no quadratic growth. The example shows that a nonnegative second-order condition is not sufficient for quadratic growth; strict positivity on parabolic tangent corrections is essential.
\end{example}

\begin{example}[Complementarity and branchwise optimality]\label{ex:comp-opt}
Let
\[
        S=\{(x,y):x\geq0,
        \ y\geq0,
        \ xy=0\},
        \qquad f(x,y)=x^2+y.
\]
At the origin,
\[
        T_S(0)=\R_+(1,0)\cup\R_+(0,1).
\]
For $u=(1,0)$, Example~\ref{ex:complementarity} gives
\[
        T_S^2(0;u)=\{w:w_2=0\}.
\]
Since $\nabla f(0,0)=(0,1)$ and
\[
        \nabla^2f(0,0)=
        \begin{pmatrix}2&0\\0&0\end{pmatrix},
\]
we obtain
\[
        Q_f(0;u,w)=2.
\]
The other tangent branch $u=(0,1)$ is not critical because $\nabla f(0,0)u=1>0$. Thus the origin is a strict local minimizer and the second-order test is applied only on the branch that is critical. This illustrates the practical role of the stratified tangent formula: branch selection is necessary before applying second-order conditions.
\end{example}

\begin{example}[A nonregular equality description and the need for branch recognition]\label{ex:tacnode-opt}
Let
\[
        S=\{(x,y):y^2=x^4\},
        \qquad f(x,y)=y^2+x^2.
\]
At the origin, $f\geq0$ and $f|_S=x^4+x^2$, so quadratic growth holds. The defining equation $p=y^2-x^4$ has $\nabla p(0,0)=0$, and the naive second-order linearization of the equality gives no useful restriction. The branchwise parabolic tangent set from Example~\ref{ex:tacnode} gives
\[
        T_S^2(0;(1,0))
        =\{w:w_2=2\}\cup\{w:w_2=-2\}.
\]
For either branch,
\[
        Q_f(0;(1,0),w)=2,
\]
because $\nabla f(0,0)=0$ and $u^T\nabla^2f(0,0)u=2$. Hence the branchwise parabolic analysis detects the correct quadratic growth even though the original equality representation is singular at the minimizer.
\end{example}
\section{Conclusion}\label{sec:conclusion}

We have developed an algebraic approach to parabolic second-order tangent sets of semialgebraic sets and applied it to local polynomial optimization. The central point is that the inclusion
\[
        T_S^2(\bar x;u)\subseteq \cL_S^2(\bar x;u)
\]
is automatic, while the reverse inclusion requires genuine geometry: directional rank stability and semialgebraic parabolic arc-realizability. When these conditions hold, parabolic regularity converts abstract second-order tangent constructions into finite algebraic systems involving gradients and Hessians of active defining polynomials.

For polynomial optimization, this gives a concrete way to test second-order necessary conditions and quadratic growth. The examples show that the method is sensitive to features that first-order tangent cones miss: curvature of a smooth feasible branch, intervals of admissible second-order curvature, flat constraints whose first nonzero term occurs beyond order two, and branchwise behavior in complementarity-type or singular feasible sets.

Several natural questions remain. One is to develop higher-order or fractional-order tangent analogues for semialgebraic branches such as cusps, where the first nontrivial correction may occur at a Puiseux exponent between one and two. Another is to connect the local parabolic tests developed here with global polynomial optimization methods, such as moment-SOS relaxations and gradient-ideal certificates. A third direction is to formulate effective algorithms for verifying algebraic parabolic regularity and computing $\cL_S^2(\bar x;u)$ uniformly over critical directions.


\begin{thebibliography}{99}

\bibitem{BenTal1980}
A. Ben-Tal, Second-order and related extremality conditions in nonlinear programming, \emph{Journal of Optimization Theory and Applications} \textbf{31} (1980), 143--165. DOI: \href{https://doi.org/10.1007/BF00933993}{10.1007/BF00933993}.

\bibitem{BenTalZowe1982}
A. Ben-Tal and J. Zowe, A unified theory of first and second order conditions for extremum problems in topological vector spaces, \emph{Mathematical Programming Study} \textbf{19} (1982), 39--76. DOI: \href{https://doi.org/10.1007/BFb0120923}{10.1007/BFb0120923}.

\bibitem{BCS1999}
J. F. Bonnans, R. Cominetti, and A. Shapiro, Second order optimality conditions based on parabolic second order tangent sets, \emph{SIAM Journal on Optimization} \textbf{9} (1999), 466--492. DOI: \href{https://doi.org/10.1137/S1052623496306760}{10.1137/S1052623496306760}.

\bibitem{BonnansShapiro2000}
J. F. Bonnans and A. Shapiro, \emph{Perturbation Analysis of Optimization Problems}, Springer, New York, 2000. DOI: \href{https://doi.org/10.1007/978-1-4612-1394-9}{10.1007/978-1-4612-1394-9}.

\bibitem{BochnakCosteRoy1998}
J. Bochnak, M. Coste, and M.-F. Roy, \emph{Real Algebraic Geometry}, Ergebnisse der Mathematik und ihrer Grenzgebiete, vol. 36, Springer, Berlin, 1998. DOI: \href{https://doi.org/10.1007/978-3-662-03718-8}{10.1007/978-3-662-03718-8}.

\bibitem{Coste2000}
M. Coste, \emph{An Introduction to Semialgebraic Geometry}, Dip. Mat. Univ. Pisa, Dottorato di Ricerca in Matematica, 2000.

\bibitem{Gibson1979}
C. G. Gibson, K. Wirthmueller, A. A. du Plessis, and E. J. N. Looijenga, \emph{Topological Stability of Smooth Mappings}, Lecture Notes in Mathematics, vol. 552, Springer, Berlin, 1976. DOI: \href{https://doi.org/10.1007/BFb0082342}{10.1007/BFb0082342}.

\bibitem{Ioffe1991}
A. D. Ioffe, Variational analysis of a composite function: A formula for the lower second-order epi-derivative, \emph{Journal of Mathematical Analysis and Applications} \textbf{160} (1991), 379--405. DOI: \href{https://doi.org/10.1016/0022-247X(91)90305-O}{10.1016/0022-247X(91)90305-O}.

\bibitem{Lojasiewicz1965}
S. Lojasiewicz, \emph{Ensembles semi-analytiques}, IHES notes, 1965.


\bibitem{Lasserre2001}
J. B. Lasserre, Global optimization with polynomials and the problem of moments, \emph{SIAM Journal on Optimization} \textbf{11} (2001), 796--817. DOI: \href{https://doi.org/10.1137/S1052623400366802}{10.1137/S1052623400366802}.

\bibitem{NieDemmelSturmfels2006}
J. Nie, J. Demmel, and B. Sturmfels, Minimizing polynomials via sum of squares over the gradient ideal, \emph{Mathematical Programming} \textbf{106} (2006), 587--606. DOI: \href{https://doi.org/10.1007/s10107-005-0672-6}{10.1007/s10107-005-0672-6}.

\bibitem{MMS2021}
A. Mohammadi, B. S. Mordukhovich, and M. E. Sarabi, Parabolic regularity in geometric variational analysis, \emph{Transactions of the American Mathematical Society} \textbf{374} (2021), 1711--1763. DOI: \href{https://doi.org/10.1090/tran/8253}{10.1090/tran/8253}.

\bibitem{MohammadiSarabi2020}
A. Mohammadi and M. E. Sarabi, Twice epi-differentiability of extended-real-valued functions with applications in composite optimization, \emph{SIAM Journal on Optimization} \textbf{30} (2020), 2379--2409. DOI: \href{https://doi.org/10.1137/19M1300066}{10.1137/19M1300066}.

\bibitem{Mordukhovich2006}
B. S. Mordukhovich, \emph{Variational Analysis and Generalized Differentiation I: Basic Theory}, Springer, Berlin, 2006. DOI: \href{https://doi.org/10.1007/3-540-31247-1}{10.1007/3-540-31247-1}.

\bibitem{Mordukhovich2018}
B. S. Mordukhovich, \emph{Variational Analysis and Applications}, Springer, Cham, 2018. DOI: \href{https://doi.org/10.1007/978-3-319-92775-6}{10.1007/978-3-319-92775-6}.

\bibitem{PoliquinRockafellar1996}
R. A. Poliquin and R. T. Rockafellar, Prox-regular functions in variational analysis, \emph{Transactions of the American Mathematical Society} \textbf{348} (1996), 1805--1838. DOI: \href{https://doi.org/10.1090/S0002-9947-96-01596-1}{10.1090/S0002-9947-96-01596-1}.

\bibitem{Rockafellar1988}
R. T. Rockafellar, First- and second-order epi-differentiability in nonlinear programming, \emph{Transactions of the American Mathematical Society} \textbf{307} (1988), 75--108. DOI: \href{https://doi.org/10.1090/S0002-9947-1988-0936805-4}{10.1090/S0002-9947-1988-0936805-4}.

\bibitem{Rockafellar1989}
R. T. Rockafellar, Second-order optimality conditions in nonlinear programming obtained by way of epi-derivatives, \emph{Mathematics of Operations Research} \textbf{14} (1989), 462--484. DOI: \href{https://doi.org/10.1287/moor.14.3.462}{10.1287/moor.14.3.462}.

\bibitem{RockafellarWets1998}
R. T. Rockafellar and R. J.-B. Wets, \emph{Variational Analysis}, Springer, Berlin, 1998. DOI: \href{https://doi.org/10.1007/978-3-642-02431-3}{10.1007/978-3-642-02431-3}.

\bibitem{Shapiro1997}
A. Shapiro, First and second order analysis of nonlinear semidefinite programs, \emph{Mathematical Programming} \textbf{77} (1997), 301--320. DOI: \href{https://doi.org/10.1007/BF02614439}{10.1007/BF02614439}.

\bibitem{vanDenDries1998}
L. van den Dries, \emph{Tame Topology and O-minimal Structures}, Cambridge University Press, 1998. DOI: \href{https://doi.org/10.1017/CBO9780511525919}{10.1017/CBO9780511525919}.

\bibitem{Whitney1965}
H. Whitney, Tangents to an analytic variety, \emph{Annals of Mathematics} \textbf{81} (1965), 496--549. DOI: \href{https://doi.org/10.2307/1970400}{10.2307/1970400}.

\end{thebibliography}
\end{document}